\documentclass[11pt]{article}
\usepackage{amsmath,amsfonts,amssymb}
\usepackage{graphicx, color}

\def\forall{\hbox{for all}~}

\def\ve{\varepsilon}

\def\D{{\cal D}}

\def\caL{{\cal L}}
\def\R{{\mathbb R}}%{I\!\!R}

\def\implies{\Longrightarrow}

\def\vs{\vskip 2em}

\def\v{\vskip 1em}

\def\H{{\cal H}}

\def\ov{\overline}
\def\Tilde{\widetilde}

\def\bd{\partial^*}
\def\bega{\begin{array}}
\def\enda{\end{array}}
\def\begi{\begin{itemize}}
\def\endi{\end{itemize}}

\def\ds{\displaystyle}
\def\bel{\begin{equation}\label}
\def\eeq{\end{equation}}
\def\sqr#1#2{\vbox{\hrule height .#2pt
\hbox{\vrule width .#2pt height #1pt \kern #1pt
\vrule width .#2pt}\hrule height .#2pt }}
\def\square{\sqr74}
\def\endproof{\hphantom{MM}\hfill\llap{$\square$}\goodbreak}

\usepackage{url}

\newtheorem{theorem}{Theorem}[section]

\newtheorem{lemma}{Lemma}[section]

\newtheorem{proposition}{Proposition}[section]
\newtheorem{remark}{Remark}[section]

\begin{document}
\title{\bf On the Minimum Speed for a Fire Blocking Problem}
\vs
\author{Alberto Bressan$^{(1)}$ and Luca Talamini$^{(2)}$
\\
\, \\
{\small $^{(1)}$Department of Mathematics, Penn State University,} {\small University Park, PA~16802, USA.}\\
 {\small $^{(2)}$~SISSA, Via Bonomea 265, Trieste 34136,  Italy.}\\
\, \\
{\small E-mails: axb62@psu.edu,~ltalamin@sissa.it}
}
\maketitle

\begin{abstract} 
We consider a dynamic blocking problem: a
fire, starting on a bounded open set $R_0\subset\R^2$, propagates  in the plane with unit speed in all directions.
To block it, a barrier can be constructed in real time,
at speed $\lambda>0$. A long standing conjecture by the first author states that the fire can be entirely
surrounded by the barrier, in finite time,
if and only if $\lambda>2$.   The present paper improves on earlier lower bounds. Namely,  if $\lambda\leq \Lambda=1.62621170228\ldots $
we show that the fire cannot be blocked.     We also prove that, if $\lambda\leq 2$, the fire cannot be blocked by 
constructing a barrier which is a simple closed curve.   More precisely, if the barrier does not contain purely delaying arcs, i.e.~arcs which are eventually surrounded by the fire on both sides, then for  $\lambda<2$ the
region burned by the fire at time $t>0$ has area $A(t)>(2-\lambda )t^2/4$.
\end{abstract}

{ \small {\bf Keywords}: dynamic blocking problem, Hamilton-Jacobi equation with obstacles,
isoperimetric inequality, wildfire confinement.\\
{\bf MSC}: 34A60, 49Q20.
}

\section{Introduction}
\label{sec:1}
\setcounter{equation}{0}

\v
Aim of this note is to analyze the dynamic blocking problem introduced
in \cite{B}, originally
motivated by the control of wild fires or of the
spatial spreading of a contaminating agent.

At each time $t\geq 0$, we denote by $R_t\subset\R^2$
the {\em burned region}.
In the absence of control,
we assume that the set $R(t)$ grows uniformly in all directions, namely
$$R_t\doteq B(R_0, t)~\doteq \Big\{ x\in\R^2\,;~~ d(x, R_0)\leq t\Big\}.
$$
Here $R_0\subset\R^2$ is a fixed  (nonempty) bounded open set, describing the
region burned by the fire at the initial time $t=0$.
In our model,  the spreading of the fire can be
controlled by constructing barriers. In mathematical terms,
we thus assume that the
controller can construct a one-dimensional rectifiable curve
$\gamma$ which blocks the spreading of the fire.
Calling $\gamma(t)\subset\R^2$ the portion of the wall constructed within
time $t\geq 0$, and denoting by $\H^1$  the one-dimensional Hausdorff measure, we assume:
\begi
\item[{\bf (A1)}]~For every $t_2>t_1\geq 0$ one has $\gamma(t_1)\subseteq\gamma(t_2)\,$.
\item[{\bf (A2)}]~For every $t\geq 0$, the total length of the barrier satisfies
\bel{1.4}\H^1(\gamma(t)) \leq \lambda t.\eeq
\endi
By {\bf (A1)}, after a portion of the barrier has been constructed, it remains in place forever and 
cannot be moved to any other location.  
The constant $\lambda>0$ in (\ref{1.4}) 
is the speed at which the wall
is constructed.
A barrier $\gamma(\cdot)$ satisfying {\bf (A1)-(A2)} will be
called a {\em $\lambda$-admissible strategy}.  
%
% In addition, we say that
%the strategy $\gamma$ is {\em complete} if it satisfies
%\v
%{(H3)} For every $t\geq 0$ there holds
%\bel{1.5} \H^1(\gamma(t)) = \lambda t\,,
%\qquad\qquad \gamma(t)=\bigcap_{s>t}\gamma(s)\,.\eeq
%Moreover, if $\gamma(t)$ has positive upper
%density at
%a point $x$, i.e.~if
%$$\limsup_{r\to 0+}{ \H^1\Big( B(x,r)\cap\gamma(t)\Big)\over r}>0\,,$$
%then $x\in\gamma(t)$.  Here $B(x,r)$ is the ball centered at $x$
%with radius $r$.
%\v
%As proved in \cite{BDL}, for every admissible strategy $t\mapsto \gamma(t)$
%one can construct a second admissible strategy $t\mapsto \tilde\gamma(t)
%\supseteq\gamma(t)$,
%which is complete.
By constructing barriers, the burned set is reduced.
Indeed, this set is 
\bel{1.6}\begin{array}{rl}R^\gamma_t~\doteq&\Big\{ x(t)\,;~~x(\cdot)~~
\hbox{is absolutely continuous}\,,~~
x(0)\in R_0\,,\cr
&\qquad\quad |\dot x(\tau)|\leq 1 ~~\hbox{for a.e.}~
\tau\in [0,t]\,,\qquad x(\tau)\notin \gamma(\tau)~~
\hbox{for all}~\tau\in [0,t]\Big\}.
\end{array}\eeq
In the above setting, following \cite{B} we consider
\v
\begi
\item[{\bf(BP)}] {\bf Blocking Problem.}  {\it Given an initial bounded open set $R_0\subset \R^2$,
find an admissible strategy
$t\mapsto\gamma(t)$ such that the corresponding reachable sets
$R^\gamma_t$ remain uniformly bounded.}
\endi
In other words,
we seek a strategy such that
 $$R^\gamma_t~\subseteq~ B_R~\doteq~\{x\in\R^2\,;~|x|< R\} \qquad\forall t\geq 0,$$
for some radius $R$ sufficiently large.
A key question that still  remains open is
\begi
\item[{\bf (Q)}] {\it For which construction speeds $\lambda$ can the fire be blocked ?}
\endi
By  rescaling argument, it is clear that the answer does not depend on the initial set $R_0$.  Indeed,
 if $\gamma(\cdot)$ is a $\lambda$-admissible strategy that blocks a fire starting from the set $R_0$, then for every $c>0$ the rescaled strategy
$$\Tilde\gamma(t)~=~c \,\gamma(t/c)$$
is a $\lambda$-admissible strategy that 
blocks a fire initially burning on the set $c R_0$.
W.l.o.g.~one can thus assume 
that $R_0=\bigl\{x\in\R^2\,;~|x|<1\bigr\}$ is the unit disc.

A long standing conjecture by the first author is that the fire can be blocked only if $\lambda>2$.
In this direction, the following results are known.
\begi
\item[(1)] When $\lambda>2$ the fire can be blocked.  When $\lambda\leq 1$ it cannot be blocked \cite{B}.
\item[(2)] If the fire propagation is restricted to a half plane, then it can be blocked if and only if $\lambda >1$, see \cite{BW2}.
\item[(3)] 
%If the entire barrier is a simple closed curve, then the blocking strategy can be admissible only if $\lambda >2$.
A spiral-like strategy which blocks the fire can be constructed only if $\lambda>\lambda^*\doteq  2.614430844\ldots$
\cite{BZ, BBFJ}.
\item[(4)] If $\lambda\leq 3/2$, the fire cannot be blocked \cite{Schmidt}.
\endi
%It has remained an open question whether a blocking strategy exists when $\lambda \in \, ]3/2, 2]$.
%By (3), if such a strategy exists, the barrier cannot be a simple closed curve.  Namely, it must also contain
%{\bf delaying arcs}, which are eventually surrounded by the fire on both sides.  
%Such arcs have the only purpose to slow down 
%the fire advancement.
In the present note we point out a very simple argument showing that, when $\lambda\leq \pi/2$, 
the fire cannot be blocked.  Indeed, calling $A(t)$ the area of the burned region at time $t\geq 0$,
by the isoperimetric inequality the boundary of the burned set has length $\geq 2\sqrt{\pi A(t)}$.
Of this boundary, at most a portion of length $\lambda t$ can be covered by walls. 
On the remaining portion $F_t$ the 
fire expands freely (see Fig.~\ref{f:fc207}, left).   This yields the lower bound
\bel{dat}{d\over dt} A(t)~\geq~ 2\sqrt{\pi A(t)}-\lambda t.\eeq
When $\lambda\leq \pi/2$, this differential inequality yields $A(t)\geq c t^2$ for some $c>0$ and all $t\geq 0$, showing that the fire cannot be blocked.

This lower bound on the blocking speed $\lambda$ can be improved by observing that,
 for any $\tau>t$, the 
barrier $\gamma(\tau)$ must contain $\gamma(t)$ (see Fig.~\ref{f:fc207}, right).  Hence at one of the two times  $t$ or $\tau$ the 
inequality (\ref{dat}) must be strict.   A lower bound on the sum $A'(t) + A'(\tau)$ can now be obtained
by looking at a double-bubble minimization problem,  for the weighted sum of the perimeters of two
sets with areas $A(t)$ and $A(\tau)-A(t)$.

Given a time $T>0$, for any couple of times $t\in [0, T/2]$ and $\tau = t+T/2$ the bounds  
known for the double-bubble problem \cite{ABFHZ, Lawlor, R} provide a sharper estimate on the lengths
of the free boundaries $F_t$ and $F_\tau$ where the fire expands.
In particular, when $\lambda\leq\Lambda\doteq 1.62621170228\ldots$
an integration yields
$$A(T)~=~A(0)+\int_0^{T/2} \bigl[ A'(t) + A'(t+T/2)\bigr]\, dt~\geq ~c_* T^2$$
for some $c_*>0$ and all $T\geq 0$.  Again, this shows that the fire cannot be blocked.

The last section of the paper is concerned with  the case where the entire barrier $\Gamma=\cup_{t>0} \gamma(t)$
is a Jordan curve, i.e.~a simple closed curve.   In such case we show that the fire can be blocked 
if and only if $\lambda>2$.    If $\lambda<2$ and the barrier does not contain purely delaying arcs, i.e.~arcs that
are eventually surrounded by the fire from both sides, 
we prove that the area of the burned set satisfies $A(t)\geq (2-\lambda)t^2/4$.
This indicates that, if a $\lambda$-admissible blocking strategy were to exist for some $\lambda<2$,
it would have to contain some barriers whose only purpose is to delay the spreading of the fire.
As time increases, these barriers eventually lie in the interior of the closure of the burned set.

%
%We recall that the Hausdorff distance between two compact sets
%$X,Y$
%is defined as
%$$d_H(X,Y)\doteq \max\Big\{ \max_{x\in X} ~d(x,Y)~,~
%\max_{y\in Y} d(y,X)\Big\}\,,$$
%where
%$d(x,Y)\doteq \inf_{y\in Y} d(x,y)$ and
%$d(x,y)\doteq |x-y|$ is the Euclidean distance on $\R^2$.
%
For the definition and properties of rectifiable sets we refer to \cite{AFP, F, M}.
Related problems of fire confinement have been studied in \cite{BW, KLL}.
In connection with a cost functional accounting for the value of the burned region plus the cost of constructing the barrier, the existence of optimal barriers was proved in  \cite{BDL, DLR}.   Necessary conditions for optimality,
describing each arc of the optimal barrier as the solution to a suitable ODE, were derived in \cite{B, BW3, WT}.

\v

\begin{figure}[ht]
\centerline{\hbox{\includegraphics[width=10cm]{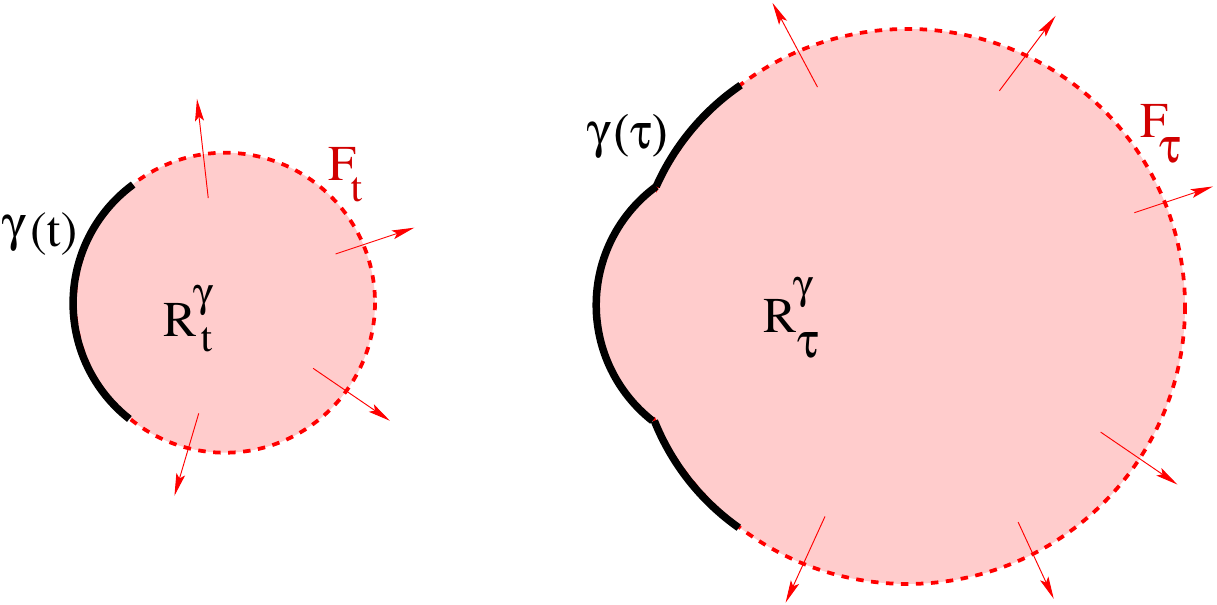}}}
\caption{\small Left: by the isoperimetric inequality, at any time $t>0$ the portion $F_t$ 
of the boundary where the fire expands has length $\geq 2\sqrt{\pi A(t)} -\lambda t $.   
Right: since for $\tau>t$ the 
barrier $\gamma(\tau)$ must contain $\gamma(t)$, at one of the two times  $t$ or $\tau$ the 
inequality (\ref{dat}) must be strict.   For this reason, the bound in (\ref{Atbo}) can be slightly improved.}
\label{f:fc207}
\end{figure}
\section{Preliminary results on the minimum time function}
\label{sec:2}
\setcounter{equation}{0}

Throughout this section we assume that $R_0$ is a bounded  open set, whose boundary has finite length:
$\H^1(\partial R_0)<+\infty$.

Consider the minimum time function associated with an admissible
strategy $\gamma$:
\bel{time}
u^\gamma(x)~\doteq~
\inf\bigl\{t>0\,;~~x\in R^\gamma_t\bigr\},
\eeq
with the convention $\inf\emptyset=+\infty$.

Following \cite{AFP, F}, by a {\em rectifiable set}  we mean a
set $\Gamma\subset
\R^2$ which can be decomposed as
$\Gamma=\Gamma_0\cup\Gamma_1$,
where ${\H^1}(\Gamma_0)=0$,
${\H^1}(\Gamma_1)<\infty$, and $\Gamma_1$ is contained in a countable union of
Lipschitz continuous curves.
We say that the rectifiable set $\Gamma$ is {\em complete} if it contains all
of its points
of positive upper density:
$$%\theta^*(\Gamma;x)~\doteq~
\limsup_{r\to 0+}{ {\H^1}\Big( B(x,r)\cap\Gamma\Big)\over r}~>~0
\qquad\implies\qquad x\in\Gamma
\,.$$
We recall that, if $\Sigma$ is any rectifiable set (possibly not complete),
the closure $\overline \Sigma$ may well be the entire plane $\R^2$.
However, the union
$$\Sigma^*~\doteq~\Sigma~\cup~\left\{ x\in\R^2\,;~~
\limsup_{r\to 0+}{ {\H^1}\Big( B(x,r)\cap\Sigma\Big)\over r}>0
\right\}$$
is a complete rectifiable set, with ${\H^1}(\Sigma^*) = {\H^1}(\Sigma)$.
Throughout the following, we refer to $\Sigma^*$ as the {\em completion}
of the set $\Sigma$.

Following \cite{BDL, DLR}, an admissible strategy is called
{\em complete} if, for every $t\geq0$,
\[
\gamma(t)=\bigcap_{s>t}\gamma(s),
\]
and moreover $\gamma(t)$ contains every point $x\in\mathbb R^2$ satisfying
\[
\limsup_{r\downarrow0}
\frac{\H^1\bigl(\gamma(t)\cap B_r(x)\bigr)}{r}\,>\,0.
\]
We recall that by \cite[Theorem 1]{DLR} the minimum-time function $v=u^\gamma:\mathbb R^2\to[0,+\infty]$ corresponding to an admissible strategy $\gamma(\cdot)$ satisfies the following regularity properties.
For every $s>0$, the truncation $v_s=\min\{v,s\}$ satisfies
\[
v_s\in SBV_{\rm loc}(\mathbb R^2),\qquad
v_s=0\quad\hbox{a.e. on }R_0,
\]
and moreover the jump set of $v_s$ and the absolutely continuous part of its gradient satisfy
\[
\H^1(J_{v_s}\setminus\gamma(s))=0,
\qquad
|\nabla^a v_s|\leq1\quad\hbox{a.e.}
\]

We next relate the growth of the burned area to the free front. Here and in the sequel, $|\Omega|$
denotes the Lebesgue measure of a set $\Omega\subset\R^2$, while $\partial^*\Omega$ denotes its reduced boundary (see \cite{AFP, M} for precise definitions).

\begin{proposition}\label{prop:arearate}
Let $\gamma$ be a complete, admissible strategy and set
$A(t)=|R^\gamma_t|$, with $A(0)=|R_0|$.
Then $A$ is nondecreasing and $R^\gamma_t$ has finite perimeter
for almost every $t>0$.
For such times, define
\bel{WFt}
W_t\,\doteq\,\gamma(t)\cap\partial^\ast R^\gamma_t,
\qquad
F_t\,\doteq\,\partial^\ast R^\gamma_t\setminus\gamma(t).
\eeq
Then
\[
\partial^\ast R^\gamma_t\,=\,W_t\mathbin{\cup}F_t,\qquad\qquad W_t\mathbin{\cap}F_t\,=\,\emptyset.
\]
Moreover, for all $\tau_1 < \tau_2$ there holds
\begin{equation}\label{eq:areaineq}
A(\tau_2)~\geq~ A(\tau_1) +\int_{\tau_1}^{\tau_2}\H^1(F_s)\,ds.
\end{equation}
\end{proposition}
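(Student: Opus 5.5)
We will work throughout with the minimum time function $v=u^\gamma$ and its truncations $v_s=\min\{v,s\}$, using the regularity recalled from \cite[Theorem 1]{DLR}. Up to null sets, $R^\gamma_t=\{v\le t\}$; more precisely $\{v<t\}\subseteq R^\gamma_t\subseteq\{v\le t\}$. Since $R^\gamma_t$ is increasing in $t$ by its definition (\ref{1.6}), $A$ is nondecreasing.

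\textbf{Finite perimeter.} Fix $s>0$. For a.e. $t<s$ we apply the coarea formula for $BV$ functions to $v_s\in SBV_{\rm loc}$. Since $v_s\ge 0$ and $\{v_s\le t\}\subseteq B(R_0,s)$ is bounded, we can localize on a large ball. The formula gives
$$\int_0^s \mathrm{Per}(\{v_s>t\})\,dt=|Dv_s|(\R^2)<\infty.$$
Hence for a.e. $t$ the set $\{v_s\le t\}$, which coincides with $R^\gamma_t$ up to a null set, has finite perimeter. The decomposition $\partial^*R^\gamma_t=W_t\cup F_t$ with $W_t\cap F_t=\emptyset$ is then immediate from (\ref{WFt}).

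\textbf{The area inequality.} Here we use the splitting $Dv_s=\nabla^a v_s\,\mathcal L^2+(v_s^+-v_s^-)\nu\,\H^1\llcorner J_{v_s}$, and we use it only on $\R^2\setminus\gamma(s)$. By the cited result, $\H^1(J_{v_s}\setminus\gamma(s))=0$, so on $\R^2\setminus\gamma(s)$ the measure $Dv_s$ is absolutely continuous with $|\nabla v_s|\le 1$. The key step is the coarea formula restricted to the Borel set $E=\R^2\setminus\gamma(s)$:
$$\int_{\tau_1}^{\tau_2}\H^1\bigl(\partial^*\{v_s>t\}\cap E\bigr)\,dt=\int_{\{\tau_1<v_s\le\tau_2\}\cap E}|\nabla^a v_s|\,dx\le \bigl|\{\tau_1<v\le \tau_2\}\bigr|=A(\tau_2)-A(\tau_1),$$
where $s\ge\tau_2$. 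It remains to replace $E$ by $\R^2\setminus\gamma(t)$, i.e. to control the part of $\partial^*R^\gamma_t$ lying in $\gamma(s)\setminus\gamma(t)$, since $F_t$ excludes only $\gamma(t)$.

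\textbf{Main obstacle.} I expect this replacement to be the hardest step. My plan is to show that, for a.e. $t$, $\H^1\bigl(\partial^*R^\gamma_t\cap(\gamma(s)\setminus\gamma(t))\bigr)=0$. The idea is that the wall built after time $t$ does not affect $R^\gamma_t$. More precisely, $R^\gamma_t$ coincides with the reachable set for the strategy frozen at $\gamma(t)$, whose time function has jump set contained in $\gamma(t)$ by the same theorem applied to the strategy that is frozen after time $t$. Near $\H^1$-a.e. point of $\gamma(s)\setminus\gamma(t)$, completeness of $\gamma(t)$ (such points have zero density of $\gamma(t)$) allows this argument to be localized. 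An alternative route is to apply the coarea argument on each interval $[t_i,t_{i+1}]$ of a fine partition with $s=t_{i+1}$ and let the mesh go to zero. This uses $\gamma(t)=\bigcap_{s>t}\gamma(s)$ and monotone convergence, since $\H^1(\partial^*R_t\cap(\gamma(t_{i+1})\setminus\gamma(t)))\to0$ as $t_{i+1}\downarrow t$. Summing over the intervals then yields (\ref{eq:areaineq}).
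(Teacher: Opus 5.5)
Your second (``alternative'') route is a correct proof, and it takes a different path from the paper's; I would make it the proof. On each interval $[t_i,t_{i+1}]$ of a partition of $[\tau_1,\tau_2]$, apply the coarea formula for $v_{t_{i+1}}$ on $\R^2\setminus\gamma(t_{i+1})$. There $Dv_{t_{i+1}}$ has no jump part, so you get $\int_{t_i}^{t_{i+1}}\H^1(\partial^*R^\gamma_t\setminus\gamma(t_{i+1}))\,dt\le A(t_{i+1})-A(t_i)$. Let $\sigma_n(t)>t$ be the right endpoint of the interval containing $t$. For $t$ outside the countably many partition points,
\[
\H^1(F_t)\,\le\,\H^1\bigl(\partial^*R^\gamma_t\setminus\gamma(\sigma_n(t))\bigr)+\H^1\bigl(\gamma(\sigma_n(t))\setminus\gamma(t)\bigr).
\]
The last term tends to $0$ as the mesh shrinks, because $\gamma(t)=\bigcap_{s>t}\gamma(s)$ and $\H^1(\gamma(s))\le\lambda s<\infty$. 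Fatou's lemma then gives \eqref{eq:areaineq}.

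The paper instead fixes a single truncation level $\tau>\tau_2$ and applies the coarea formula off the jump set $J_{u_\tau}$, not off $\gamma(\tau)$. It then runs a Fubini argument on the one-sided traces: for a.e.\ $s$, $\H^1$-a.e.\ jump point of $u_\tau$ on $\partial^*\{u_\tau\le s\}$ satisfies $u^-<s<u^+$. Such a point is therefore a jump point of $u_s$, hence lies in $\gamma(s)$ by the jump-set inclusion at level $s$. This yields a sharper, pointwise-in-$s$ statement and does not depend on right-continuity of $\gamma$. Your argument uses the jump-set inclusion only at finitely many levels and replaces the trace analysis by right-continuity plus finite length of the barrier. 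That is softer and more elementary.

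Your first route, which you present as the main plan, does not work as sketched. The claim $\H^1\bigl(\partial^*R^\gamma_t\cap(\gamma(s)\setminus\gamma(t))\bigr)=0$ is false for individual $t$. Start the fire on $B_1$, build nothing before time $1$, and during $(1,s]$ build an arc of the circle $\{|x|=2\}=\partial R^\gamma_1$; this strategy is complete and admissible, and the set has positive length at $t=1$. The facts you invoke hold for every $t$, including $t=1$: later walls do not affect $R^\gamma_t$, and points off $\gamma(t)$ have zero density of $\gamma(t)$. So they cannot produce an a.e.\ statement, and some averaging in $t$ is indispensable.

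The bad points are free-front points at level $t$ that happen to lie on later wall. An approximate-continuity point $x$ of $v_s$ can belong to $\partial^*R^\gamma_t$ only for $t=\tilde v_s(x)$, so $\H^1(\gamma(s))<\infty$ leaves only countably many bad $t$. Jump points still require the paper's Fubini argument. Restricting the coarea formula to $\R^2\setminus J_{v_s}$ rather than $\R^2\setminus\gamma(s)$, as the paper does, avoids the free-front points from the start.
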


{\bf Proof.} Fix $\tau > \tau_2$. By the coarea formula applied for the absolutely continuous part of the gradient of $u^\gamma_\tau$, we have
\[
  A(\tau_2)- A(\tau_1) ~\geq~  |\nabla^a u^\gamma_\tau|\bigl(\{\tau_1 < u^\gamma_\tau \leq \tau_2\}\bigr) ~ = ~\int_{\tau_1}^{\tau_2} \H^1 \bigl(\partial^\ast \{u^\gamma_\tau \leq s\} \setminus J_{u^\gamma_\tau} \bigr)\, d s.
\]
Next, we claim that, for almost every $s<\tau$, up to $\H^1$-negligible sets,
\begin{equation}\label{eq:twoincl}
J_{u^\gamma_\tau}\cap
\partial^\ast\{u^\gamma_\tau\leq s\}
\,\subseteq \,J_{u^\gamma_s}\,
\subseteq\,\gamma(s)
\end{equation}
 Indeed, for almost every time $s$, we claim that the following holds. For $\mathcal{H}^1$-a.e. $x \in J_{u^\gamma_\tau}\cap
\partial^\ast\{u^\gamma_\tau\leq s\}$, the left and right traces $(u_\tau^\gamma)^{\pm}(x)$ satisfy 
$$
(u^\gamma_\tau)^-(x)  < s < (u^\gamma_\tau)^+(x).
$$
This is obtained using Fubini's theorem:
\begin{equation}
    \begin{aligned}
        0 & = \int_{  J_{u_\tau^\gamma}   } \int_{\mathbb R} \mathbf 1_{\{u^{\pm}(x) = s\}}(x) \, d  s \; d  \H^1(x)\\
        & = \int_{\mathbb R} \int_{  J_{u_\tau^\gamma}   } \mathbf 1_{\{u^{\pm}(x) = s\}}(x) \;  d  \H^1(x) \, d  s\\
        &  = \int_{\mathbb R} \H^1\Big(\big\{x \in  J_{u_\tau^\gamma} \; | \; u^{\pm}(x) = s   \big\} \Big)ds .
    \end{aligned}
\end{equation}
Therefore, for $s$ outside a negligible set,
\begin{equation}\label{eq:fubin}
    (u_\tau^\gamma)^{\pm}(x) \neq s \qquad \text{for $\H^1$-a.e. point $x \in  J_{u_\tau^\gamma}$}.
\end{equation}
If a point $x$ additionally belongs to $ J_{u_\tau^\gamma} \cap \partial^\ast \{u_\tau^\gamma \leq s\}$, then this means that the upper trace is greater than $s$ and the lower trace is less than $s$,  i.e. $(u_\tau^\gamma)^+(x) \geq s \geq (u_\tau^\gamma)^-(x)$, for $\H^1$-a.e. point in this intersection. Together with \eqref{eq:fubin}  this 
proves the claim.  
% {\color{red} $x$ is a jump point of $u^\gamma_\tau$ for $\H^1$ a.e. $x \in J_{u^\gamma_\tau}\cap
% \partial^\ast\{u^\gamma_\tau\leq s\}$, with traces that satisfy $(u^\gamma_\tau)^-(x)  < s < (u^\gamma_\tau)^+(x)$. }
Therefore, for such $x$, we immediately obtain $x \in J_{u_s^\gamma}$. The second inclusion in \eqref{eq:twoincl} follows by the assumption that $\gamma(\cdot)$ is complete.

Consequently,
\[
A(\tau_2)-A(\tau_1)
~\geq~\int_{\tau_1}^{\tau_2}
\H^1\bigl(
\partial^\ast\{u^\gamma_\tau\leq s\}
\setminus\gamma(s)\bigr)\,ds\\
~=~\int_{\tau_1}^{\tau_2}\H^1(F_s)\,ds.
\]
\endproof
\begin{remark} {\rm A more careful proof would show 
 that \eqref{eq:areaineq} holds with equality. However, this is not needed for our present purposes. }
\end{remark}

\section{An elementary lower bound on the blocking speed}
\label{sec:3}
\setcounter{equation}{0}

In this section we provide a first, elementary lower bound for the minimum speed blocking the fire. We recall that $A(t) = |R^\gamma_t|$ is the area of the burned region at time $t > 0$, given an admissible strategy $\gamma$. We assume here that the initial set $R_0 = B_1$ is the unit ball, so that $A(0) = \pi$.

\begin{theorem}  Let $\gamma$ be any admissible strategy. In the above setting, 
if
\(0\leq\lambda\leq\pi/2\), then
\[
A(t)~\geq~ \pi(1+a_\lambda t)^2,
\qquad\hbox{where}\qquad
 a_\lambda\,\doteq\,\frac{1+\sqrt{1-2\lambda/\pi}}{2}\,.
\]
In particular, when \(\lambda=\pi/2\) one has
\bel{Atbo}
A(t)~\geq ~\pi\Bigl(1+\frac t2\Bigr)^2,
\eeq
hence the fire cannot be contained.  
\end{theorem}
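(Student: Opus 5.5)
Throughout, we may assume $\gamma$ is complete: by \cite{BDL} every admissible strategy can be enlarged to a complete one, still $\lambda$-admissible, and enlarging the barrier only shrinks $R^\gamma_t$. A lower bound on $A(t)$ for the completed strategy therefore transfers to $\gamma$. We then combine Proposition~\ref{prop:arearate} with the isoperimetric inequality to obtain the integral form of (\ref{dat}), and close with a comparison argument.

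\textbf{Step 1: integral form of (\ref{dat}).} For a.e.\ $s$, $R^\gamma_s$ has finite perimeter. The isoperimetric inequality gives $\H^1(\partial^*R^\gamma_s)\geq 2\sqrt{\pi A(s)}$. Since $W_s\subseteq\gamma(s)$ and $\H^1(\gamma(s))\leq\lambda s$, the decomposition in Proposition~\ref{prop:arearate} yields
\[
\H^1(F_s)~\geq~2\sqrt{\pi A(s)}-\lambda s .
\]
Inserting this into (\ref{eq:areaineq}) gives, for all $\tau_1<\tau_2$,
\[
A(\tau_2)-A(\tau_1)~\geq~\int_{\tau_1}^{\tau_2}\bigl(2\sqrt{\pi A(s)}-\lambda s\bigr)\,ds .
\]

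\textbf{Step 2: the comparison function.} Set $B(t)=\pi(1+at)^2$ with $a=a_\lambda$. Then
\[
B'(t)=2\pi a(1+at),\qquad 2\sqrt{\pi B(t)}-\lambda t=2\pi(1+at)-\lambda t .
\]
So $B$ solves $B'=2\sqrt{\pi B}-\lambda t$ exactly if $2\pi a=2\pi$ and $2\pi a^2=2\pi a-\lambda$. These two conditions are incompatible, so $B$ is not an exact solution and we aim instead for a subsolution. We need
\[
2\pi a(1+at)~\leq~2\pi(1+at)-\lambda t \qquad\hbox{for all } t\ge 0 .
\]
At $t=0$ this requires $a\leq1$. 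Comparing the coefficients of $t$ requires $2\pi a^2\leq 2\pi a-\lambda$, i.e.\ $\pi a^2-\pi a+\lambda/2\leq0$. The root $a_\lambda=\frac{1+\sqrt{1-2\lambda/\pi}}{2}$ of this quadratic is real exactly when $\lambda\leq\pi/2$, and it lies in $[1/2,1]$. With $a=a_\lambda$ the coefficient of $t$ matches exactly, while the constant term satisfies $2\pi a\leq 2\pi$. Hence $B'\leq 2\sqrt{\pi B}-\lambda t$, with $B(0)=\pi=A(0)$.

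\textbf{Step 3: comparison (the main obstacle).} Only the integral inequality of Step 1 is available, and $A$ is merely monotone, so we argue without derivatives. Let
\[
t^*=\sup\{t:\ A\geq B \hbox{ on } [0,t]\} .
\]
On $[0,t^*]$ we have $A\geq B$. Since $\sqrt{\cdot}$ is increasing, Step 1 together with the subsolution property gives
\[
A(t)-A(t^*)~\geq~\int_{t^*}^t\bigl(2\sqrt{\pi A}-\lambda s\bigr)\,ds .
\]
To push past $t^*$ we need strict slack. When $a<1$ there is a gap $2\pi(1-a)>0$ in the constant term. Near $t^*$ the quantity $\sqrt{A}$ cannot drop much below $\sqrt{B}$: $A$ is nondecreasing and $A(t^*)\geq B(t^*)$ by the integral inequality taken from the left. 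So $A(t)-B(t)\geq A(t^*)-B(t^*)+c(t-t^*)-o(t-t^*)>0$ for $t$ slightly larger than $t^*$, contradicting the definition of $t^*$. Hence $t^*=\infty$.

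The endpoint case $a=1$ corresponds to $\lambda=0$. It is trivial, since $A(t)\geq\pi(1+t)^2$ follows directly because the unobstructed fire covers the ball of radius $1+t$. Alternatively, one can run the argument with $a-\varepsilon$ and let $\varepsilon\to0$; this perturbation also handles any non-strictness in general. Finally, (\ref{Atbo}) is the case $\lambda=\pi/2$, where $a_\lambda=1/2$. Since $A(t)\to\infty$, the burned set is unbounded and the fire is not contained.
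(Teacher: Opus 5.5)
Your proposal is correct and follows essentially the same route as the paper: Proposition~\ref{prop:arearate} together with the isoperimetric inequality gives $A'\geq 2\sqrt{\pi A}-\lambda t$, which you keep in integral form, and one compares with $\pi(1+at)^2$, $2\pi a(1-a)=\lambda$, using the slack $2\pi(1-a)$. Your reduction to complete strategies via \cite{BDL}, and your derivative-free comparison (with the $a-\varepsilon$ perturbation covering $\lambda=0$), make rigorous two points that the paper passes over with a ``standard comparison argument''.
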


{\bf Proof.}
By Proposition \ref{prop:arearate}, there holds
\bel{A'} A'(t)\,\geq\,\H^1(F_t)
\eeq
for a.e. $t > 0$.
On the other hand, the barrier constructed up to time $t$ has length at most 
$\H^1\bigl(\gamma(t)\bigr)\leq \lambda t$.  
Therefore
\[
\H^1\bigl(\partial^* R^\gamma_t\bigr)= \H^1(F_t) + \H^1(W_t)~\leq ~ A'(t)+\lambda t.
\]
The planar isoperimetric inequality yields
\begin{equation}\label{2.1}
A'(t)~\geq ~\H^1\bigl(\partial^* R^\gamma_t\bigr)-\lambda t
      ~ \geq~ 2\sqrt{\pi A(t)}-\lambda t.
\end{equation}

Now set
\[
Q(t)\,\doteq~\pi(1+at)^2
\]
and choose \(a\in[1/2,1]\) so that
\begin{equation}\label{eq:a}
2\pi a(1-a)=\lambda.
\end{equation}
Such a number \(a\) exists precisely when \(\lambda\leq\pi/2\). Indeed
\[
\max_{0\leq a\leq1}2\pi a(1-a)=\frac\pi2.
\]
Moreover, by (\ref{eq:a})
\begin{align*}
2\sqrt{\pi Q(t)}-\lambda t-Q'(t)
&~=~2\pi(1-a)(1+at)-\lambda t\\
&~=~2\pi(1-a)~\geq~0,
\end{align*}
Therefore
\[
Q'(t)~\leq~2\sqrt{\pi Q(t)}-\lambda t.
\]

We now observe \(Q(0)=A(0)=\pi\).   Moreover, at any time $t$ where $A(t)= Q(t)$, by (\ref{2.1}) one has
 \bel{comp1}  A'(t)~\geq~2\sqrt{\pi Q(t)}-\lambda t~\geq ~Q'(t)\,.\eeq
A standard comparison argument now yields
\[
A(t)\,\geq\, Q(t)\qquad\forall t\geq0.
\]
   
At the critical value \(\lambda=\pi/2\)  the equation \eqref{eq:a} has the
double root \(a=1/2\).  Hence
\[
A(t)~\geq~\pi\Bigl(1+\frac t2\Bigr)^2~\to~\infty,
\]
showing that the fire cannot be blocked.  Of course, the same conclusion holds for every
\(\lambda<\pi/2\).
\endproof

\section{Improving the lower bound}
\label{sec:4}
\setcounter{equation}{0}

The earlier inequality (\ref{2.1}) did not use the fact that for  different times $t$ the sets $\gamma(t)$
belong to the same growing
barrier.  Relying on that additional information, the estimate can be somewhat improved.

Recall the definition of the {\em active wall} at time $t$, that is
$$W_t~\doteq~\gamma(t)\cap \bd R^\gamma_t\,.$$
Notice that the remaining portion of the barrier $\gamma(t)\setminus W_t$ consists of points where
(see Fig.~\ref{f:fc208})
\begi 
\item either the barrier has not yet been touched by the fire,
\item or the fire has already surrounded the barrier on both sides.
\endi

Fix two times where Proposition \ref{prop:arearate} applies, and consider the set
\[
\bd R^\gamma_a\cap\bd R^\gamma_b~=~ W_a\cap W_b.
\]
Note that these are the points which lie on an active wall at time $a$, which remains active until time $b$.

Introduce the function
\begin{equation}\label{eq:Qdef}
 Q(a,b)~\doteq~\H^1(\partial^* R^\gamma_a)+\H^1(\partial^* R^\gamma_b)-(2-\sqrt2 )\H^1(\bd R^\gamma_a\cap\bd R^\gamma_b).
\end{equation}
The sets
\[
 R^\gamma_a,
 \qquad
 R^\gamma_b\setminus R^\gamma_a,
 \qquad
 \R^2\setminus R^\gamma_b
\]
form a  partition of $\R^2$.  In $Q(a,b)$ the interfaces between consecutive
phases have weight $1$, while the interface separating $R^\gamma_a$ directly from the
exterior has weight $\sqrt2$.  

\begin{figure}[ht]
\centerline{\hbox{\includegraphics[width=12cm]{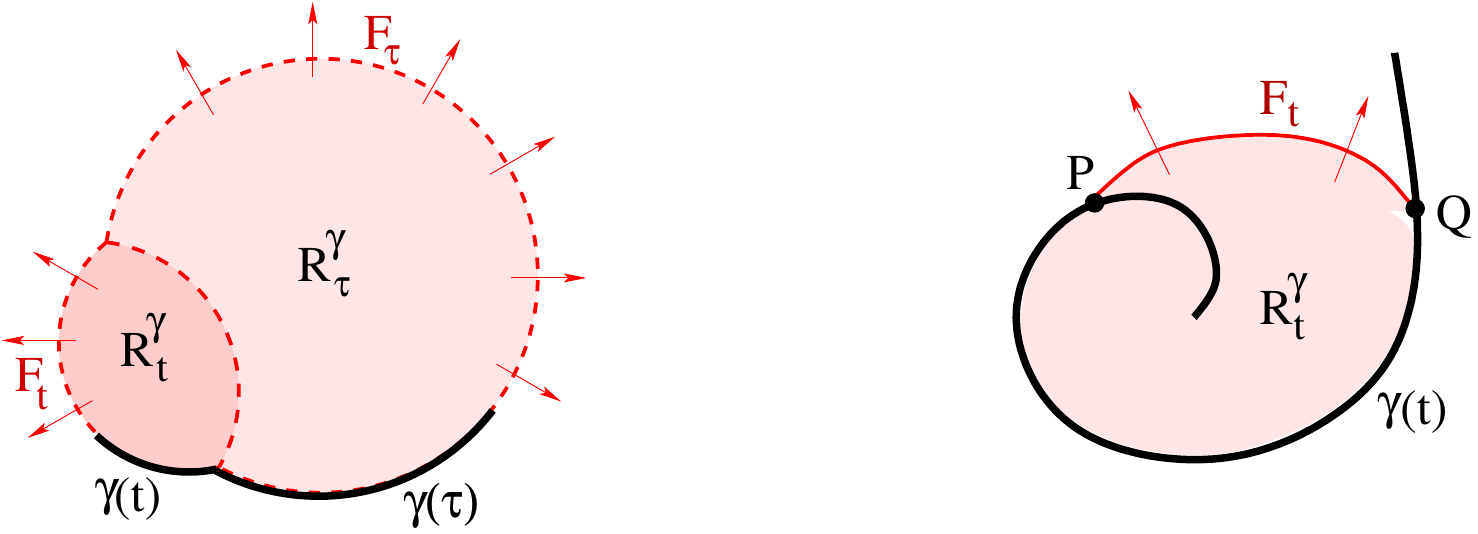}}}
\caption{\small  Left: the solution to the double-bubble problem provides a lower bound on the 
lengths of the portions $F_t$, $F_\tau$ of the boundaries where the fire expands freely, at the two times $t<\tau$. 
Right: the portion of the barrier $\gamma(t)$ between the points $P$ and $Q$  is the {\em active
wall } $W_t$ at time $t$.   The remaining portions do not restrict the instantaneous fire propagation, because
either they have not yet been reached by the fire, or they are already fully surrounded by it.
}
\label{f:fc208}
\end{figure}

In the following, recalling (\ref{WFt}) we define
$$F(t)~\doteq~\H^1(F_t)$$
to be the length of the advancing fire front, at time $t$.

\begin{proposition}\label{prop:wall}
There is a set $N \subset \mathbb R^+$ such that for every $a, b \in \mathbb R^+\setminus N$, there holds
\begin{equation}\label{eq:two-time-upper}
 Q(a,b)~\le~ F(a)+F(b)
 +\lambda\bigl[b+(\sqrt2-1)a\bigr].
\end{equation}
\end{proposition}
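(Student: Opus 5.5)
Throughout, take $a<b$ (the case relevant to the application, with $b=a+T/2$), and let $N$ be the null set of times where Proposition~\ref{prop:arearate} fails, i.e.\ where $R^\gamma_t$ may not have finite perimeter or the decomposition $\partial^*R^\gamma_t=W_t\cup F_t$ is unavailable. Write $I\doteq \bd R^\gamma_a\cap\bd R^\gamma_b$.

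The plan is a pure bookkeeping argument, using only two facts: the barrier is monotone, $\gamma(a)\subseteq\gamma(b)$ by (A1), and its lengths satisfy $\H^1(\gamma(t))\le\lambda t$ by (A2).

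The starting point is the measure-theoretic identity
\[
\H^1(\bd R^\gamma_a)+\H^1(\bd R^\gamma_b)~=~\H^1\bigl(\bd R^\gamma_a\cup\bd R^\gamma_b\bigr)+\H^1(I),
\]
which gives
\[
Q(a,b)~=~\H^1\bigl(\bd R^\gamma_a\cup\bd R^\gamma_b\bigr)+(\sqrt2-1)\,\H^1(I).
\]
Next, Proposition~\ref{prop:arearate} gives $\bd R^\gamma_a\cup\bd R^\gamma_b\subseteq F_a\cup F_b\cup W_a\cup W_b$. Since $W_a\cup W_b\subseteq\gamma(b)$, we get
\[
\H^1\bigl(\bd R^\gamma_a\cup\bd R^\gamma_b\bigr)~\le~ F(a)+F(b)+\lambda b .
\]
It then remains to show
\[
\H^1(I)~\le~\lambda a.
\]
This follows from $\H^1(\gamma(a))\le\lambda a$ once we know that $I\subseteq W_a\subseteq\gamma(a)$ up to an $\H^1$-null set. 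In other words, the whole proof reduces to the claim
\[
\H^1\bigl(F_a\cap\bd R^\gamma_b\bigr)=0\qquad\hbox{for } a<b,\ a,b\notin N .
\]
This is the precise form of the paper's assertion $\bd R^\gamma_a\cap\bd R^\gamma_b=W_a\cap W_b$: a point on the free front at time $a$ has been swallowed by the fire by time $b$.

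The main obstacle is proving this claim. I would argue it with the SBV structure of $u^\gamma_b$ from \cite{DLR}, as in the proof of Proposition~\ref{prop:arearate}. Fix $x\in F_a$.

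\emph{Step 1: the values of $u^\gamma_b$ near $x$.} Because $x\notin\gamma(a)\supseteq J_{u^\gamma_a}$, the function $u^\gamma_a$ has no jump at $x$. Since $x$ lies on the reduced boundary of $\{u^\gamma_a\le a\}$, the approximate limit of $u^\gamma_a$ at $x$ is $a$, after discarding the null set of levels where traces equal the level (the Fubini argument of Proposition~\ref{prop:arearate}). Since $u^\gamma_b=u^\gamma_a$ on $\{u^\gamma_a<a\}$, on the burned side $u^\gamma_b$ has trace $\le a<b$.

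\emph{Step 2: the two cases for $x$.} If $x\notin J_{u^\gamma_b}$, then $u^\gamma_b$ is approximately continuous at $x$ with value $\le a<b$. Hence $x$ is a density-one point of $\{u^\gamma_b\le b\}=R^\gamma_b$, and so $x\notin\bd R^\gamma_b$. The delicate case is $x\in J_{u^\gamma_b}\subseteq\gamma(b)\setminus\gamma(a)$, i.e.\ a wall built later exactly on the old front. Here I would use that both one-sided traces of $u^\gamma_b$ at $x$ are at most $a$. On the burned side this is Step 1. On the other side, the fire crosses $x$ before the wall exists there: by the definition \eqref{1.6}, trajectories through points near $x$ at times $<a+\varepsilon$ avoid $\gamma(\cdot)$ near $x$. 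I would justify this via the completeness condition $\gamma(a)=\bigcap_{s>a}\gamma(s)$, which gives $x\notin\gamma(s)$ for all $s$ in some interval $(a,a+\delta)$. Then the upper trace at $x$ is $\le a<b$, so again $x$ has density one in $R^\gamma_b$.

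\emph{Step 3: conclusion.} Either way, $\H^1$-a.e.\ point of $F_a$ lies off $\bd R^\gamma_b$, which proves the claim and hence \eqref{eq:two-time-upper}.

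One point to flag: the upper bound $x\notin\gamma(s)$ on an interval $(a,a+\delta)$ with $\delta$ depending on $x$ must be upgraded to a uniform statement $\H^1$-a.e. I would handle this by exhausting $F_a$ with the sets where $\delta\ge 1/k$, and possibly enlarging $N$ by a further null set of times.
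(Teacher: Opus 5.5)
Your bookkeeping is correct and is the paper's argument in slightly rearranged form. The paper writes $Q(a,b)-F(a)-F(b)=\H^1(W_a\cup W_b)+(\sqrt2-1)\H^1(W_a\cap W_b)$, using the identity $\bd R^\gamma_a\cap\bd R^\gamma_b=W_a\cap W_b$, which it states without proof just before defining $Q(a,b)$. It then uses $W_a\cup W_b\subseteq\gamma(b)$ and $W_a\cap W_b\subseteq\gamma(a)$, tacitly with $a<b$ as you do. You are right that this identity is equivalent to $\H^1(F_a\cap\bd R^\gamma_b)=0$ and is the only nontrivial point.

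\textbf{The gap.} Your justification of this claim breaks down in the case you call delicate. From $x\notin\gamma(s)$ for $s\in(a,a+\delta)$ you conclude that the trace of $u^\gamma_b$ on the unburned side is $\le a$. But a trace is a statement about $u^\gamma_b$ on small balls $B(x,r)$. Knowing that the single point $x$ carries no wall says nothing about walls at points $y\neq x$ arbitrarily close to $x$ (recall that $\gamma(s)$ need not be closed), and such walls could a priori keep the half-ball beyond $x$ unburned. The sentence ``trajectories through points near $x$ at times $<a+\varepsilon$ avoid $\gamma(\cdot)$ near $x$'' is precisely what has to be proved. The uniformity of $\delta$ that you flag is not the real obstacle. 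Note also that at a jump point the trace from the side of $R^\gamma_a$ equals $a$, while the other trace is $\ge a$, since $u^\gamma\ge a$ a.e.\ outside $R^\gamma_a$. So ``both traces $\le a$'' contradicts $x\in J_{u^\gamma_b}$: your case distinction is really an attempt to show that this case does not occur.

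\textbf{How to repair it.} The missing ingredient is the density clause in the definition of a complete strategy, combined with a quantitative ``short walls cast small shadows'' estimate. With these, no SBV traces are needed.
\begin{itemize}
\item Fix $x\in F_a$. By right continuity and monotonicity, choose some $s\in(a,b)$ with $x\notin\gamma(s)$. Since $\gamma(s)$ is complete, $\Sigma_r\doteq\gamma(s)\cap B(x,r)$ satisfies $\H^1(\Sigma_r)=o(r)$.
\item For $y\in B(x,r)$, let $\Theta_y$ be the set of directions of rays from $y$ that meet $\Sigma_r$. Then $|\Theta_y|\le\int_{\Sigma_r}|w-y|^{-1}\,d\H^1(w)$, hence $\int_{B(x,r)}|\Theta_y|\,dy\le 4\pi r\,\H^1(\Sigma_r)$.
\item Suppose every segment from a point of $R^\gamma_a\cap B(x,r)$ to $y$ meets $\Sigma_r$. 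Then $|R^\gamma_a\cap B(x,r)|\le 2r^2|\Theta_y|$. Since $R^\gamma_a$ has density $1/2$ at $x$, this forces $|\Theta_y|\ge\pi/8$ for small $r$.
\item Hence, outside a set of measure $\le 32\,r\,\H^1(\Sigma_r)=o(r^2)$, each $y\in B(x,r)\setminus\gamma(b)$ is joined to some $z\in R^\gamma_a\cap B(x,r)$ by a segment avoiding $\Sigma_r$.
\item Concatenate three pieces: an admissible path reaching $z$ at time $a$; this segment, traversed during $[a,a+2r]\subset[a,s]$, where $\gamma(\tau)\subseteq\gamma(s)$; and a rest at $y$ until time $b$. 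Once $2r<s-a$, this shows $y\in R^\gamma_b$.
\end{itemize}
Thus $R^\gamma_b$ has density one at $x$, so $x\notin\bd R^\gamma_b$. This holds for every $x\in F_a$, which gives $\bd R^\gamma_a\cap\bd R^\gamma_b=W_a\cap W_b$ exactly, and your bookkeeping then completes the proof.
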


{\bf Proof.}
There holds
\begin{align*}
 Q(a,b)-F(a)-F(b)
 &=\H^1(W_a)+\H^1(W_b)
   -(2-\sqrt2)\H^1(W_a\cap W_b)\\
 &=\H^1(W_a\cup W_b)
   +(\sqrt2-1)\H^1(W_a\cap W_b).
\end{align*}
Since the barrier is increasing in time,
\[
 W_a\cup W_b\subseteq\gamma(b),
 \qquad
 W_a\cap W_b\subseteq\gamma(a).
\]
Using \eqref{1.4} gives \eqref{eq:two-time-upper}.
\endproof
%
%This is the first gain over the one-time argument.  If the two perimeters were
%estimated separately, the wall cost would be $\lambda(a+b)$.  Instead, the
%common wall is paid only once, up to the smaller extra factor $\sqrt2-1$.

\subsection{Double-bubble variational problems}
We consider the following  optimization problem for sets $\Omega_1, \Omega_2 \subset \mathbb R^2$ of finite perimeter in the plane. As a convention, in the following let $\Omega_0 := \mathbb R^2 \setminus (\Omega_1 \cup \Omega_2)$, and let 
$$
\Gamma_{i,j} := \partial^\ast \Omega_i\cap \partial^\ast \Omega_j.
$$
Here $\Gamma_{i,j}$ are disjoint up to $\H^1$ negligible sets.

\begi
\item[{\bf (DBP)}] {\bf (Double Bubble Problem).} {\it Given $m,n\geq 0$, find two disjoint regions $\Omega_1,\Omega_2\subset
\R^2$ with areas $m,n$ respectively, which minimize the weighted sum of perimeters:
\bel{JB}
%J(\Omega_1,\Omega_2)~\doteq~
\H^1(\Gamma_{12}) + \H^1(\Gamma_{20})+ \sqrt 2 \cdot\H^1(\Gamma_{10}).\eeq
% Here 
% $$\Gamma_{12}\,\doteq\,\partial^*\Omega_1\cap\partial^*\Omega_2$$
% is the common boundary of the two sets, while 
% $$\Gamma_{10} \,\doteq\, \bd \Omega_1\setminus \Gamma_{12}\,,\qquad \Gamma_{20} \,\doteq\, \bd \Omega_2\setminus \Gamma_{12}\,,$$
% are the remaining portions of the two boundaries.
}\endi
%
%Call $\D(m,n)$  the minimum weighted perimeter (\ref{JB}) of a partition of the plane
%into two bounded chambers of areas $m$ and $n$ and the exterior, with surface
%tensions
%\begin{equation}\label{eq:weights}
% \sigma_{12}=1,
% \qquad
% \sigma_{20}=1,
% \qquad
% \sigma_{10}=\sqrt2.
%\end{equation}
The weighted double-bubble theorem of Lawlor~\cite{Lawlor} identifies the
minimizer with the standard weighted double bubble made of three circular
arcs, as shown in Fig.\,\ref{f:fc209}.   The minimum cost $\D(m,n)$ has the following properties:
\bel{scaling} \D(cm,cn)\,=\,\sqrt c\,\D(m,n), \qquad \forall c>0,
 \eeq
 \bel{endpoints} \D(0,n)\,=\,2\sqrt{\pi n},
 \qquad
 \D(m,0)\,=\,2\sqrt{2\pi m}.
\eeq

\begin{remark}\label{r:41} {\rm 
Applying the definition of $\D$ to the two sets
$\Omega_1=R^\gamma_a$ and $\Omega_2= R^\gamma_b\setminus R^\gamma_a$, one immediately obtains
\bel{eq:bubble-lower}
 Q(a,b)~\ge~
 \D\bigl(A(a),~A(b)-A(a)\bigr).
\end{equation}
}
\end{remark}

%There is an important subtlety here.  Later we shall know only that
%\[
% A(a)\ge ca^2,
% \qquad
% A(b)\ge cb^2.
%\]
%This does \emph{not} imply
%$A(b)-A(a)\ge c(b^2-a^2)$.  Therefore one cannot simply replace the two
%chamber areas in \eqref{eq:bubble-lower}.  The next lemma is what makes the
%argument rigorous.

The following monotonicity property will also be useful.
\begin{lemma}\label{l:mono}
Let $\D(m, n)$ be the minimum cost for the problem {\bf (DBP)} with areas $m, n > 0$. Let $v_0>0$ and $0\le m_0\le v_0/4$. Then for all $0 \leq m \leq v$ there holds
\begin{equation}\label{eq:monotonicity}
     \D(m,v-m)~\ge~ \D(m_0,v_0-m_0) \qquad \forall \; 0 < v_0 \leq v, \quad 0 \leq m_0 \leq \min \{m, \frac{v_0}{4}\}
\end{equation}
% If
% \[
%  m\ge m_0,
%  \qquad
%  v\ge v_0,
%  \qquad
%  0\le m\le v,
% \]
% then
% \begin{equation}\label{eq:monotonicity}
%  \D(m,v-m)~\ge~ \D(m_0,v_0-m_0).
% \end{equation}
\end{lemma}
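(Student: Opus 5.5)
The plan is to reduce \eqref{eq:monotonicity} to two one-variable facts about $\D$, using the scaling law \eqref{scaling}. Introduce the normalized profile
$$g(\theta)\,\doteq\,\D(\theta,1-\theta),\qquad \theta\in[0,1],$$
so that by \eqref{scaling}
$$\D(m,v-m)\,=\,\sqrt v\, g(m/v),$$
with $g(0)=2\sqrt\pi$ and $g(1)=2\sqrt{2\pi}$ by \eqref{endpoints}. The two facts I would establish are:

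\begin{itemize}
\item[\textbf{(F1)}] For fixed $m\geq0$, the map $n\mapsto \D(m,n)$ is nondecreasing.
\item[\textbf{(F2)}] For every $\theta_0\in[0,1/4]$ one has $g(\theta)\geq g(\theta_0)$ for all $\theta\in[\theta_0,1]$. For instance, this holds if $g$ is nondecreasing on $[0,1/4]$ and $g\geq g(1/4)$ on $[1/4,1]$.
\end{itemize}

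Granting (F1)--(F2), the proof is a case split. Set $\theta_0\doteq m_0/v_0\le 1/4$.

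\emph{Case 1: $m/v\ge\theta_0$.} Using $v\geq v_0$ and (F2),
$$\D(m,v-m)\,=\,\sqrt v\,g(m/v)\,\ge\,\sqrt{v_0}\,g(\theta_0)\,=\,\D(m_0,v_0-m_0).$$

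\emph{Case 2: $m/v<\theta_0$.} Then $m_0>0$, and we define $v_1\doteq m/\theta_0 = m v_0/m_0$. Since $m\ge m_0$ we get $v_1\ge v_0$, and $v_1<v$ by the case assumption. By (F1), applied with $v-m\geq v_1-m$,
$$\D(m,v-m)\,\ge\,\D(m,v_1-m)\,=\,\sqrt{v_1}\,g(\theta_0)\,\ge\,\sqrt{v_0}\,g(\theta_0).$$
The degenerate case $m_0=0$ falls under Case 1.

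For (F1), I would use the explicit structure of Lawlor's minimizer: three circular arcs meeting at two triple junctions, with Young-type angle conditions fixed by the weights $(1,1,\sqrt2)$. The derivative $\partial_n\D(m,n)$ is the Lagrange multiplier, i.e.\ the pressure of chamber $\Omega_2$. Its outer arc (weight $1$) is convex, so this pressure equals its positive curvature. Hence $\partial_n\D\ge0$, and the case $m=0$ is immediate from \eqref{endpoints}. As a backup not relying on differentiability, one can compare with the explicit one-parameter family of standard bubbles and check directly that their cost increases with $n$.

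For (F2), I would write $g$ explicitly through Lawlor's parametrization: the arc radii and angles are determined by the weights and by the area ratio. From this I would verify monotonicity on $[0,1/4]$ and the bound $g(\theta)\ge g(1/4)$ on $[1/4,1]$. On $[1/4,1]$ a crude comparison may suffice: e.g.\ $\D(m,n)\ge 2\sqrt{\pi(m+n)}$, obtained by dropping the interior interface and using $\sqrt2\ge1$, gives $g\ge2\sqrt\pi$; but this is below $g(1/4)$, so a finer estimate such as the isoperimetric bound for chamber $1$ is likely needed near $\theta=1/4$. I expect (F2) to be the main obstacle, since it requires genuinely quantitative information on the weighted double bubble and may in the end need a careful (possibly computer-assisted) evaluation of the explicit formula. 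The threshold $1/4$ in the statement suggests that it was chosen precisely so that this monotonicity can be verified.
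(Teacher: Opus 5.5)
Your reduction is correct. With $g(\theta)=\D(\theta,1-\theta)$ and the scaling law, Case~1 uses only (F2), Case~2 uses only (F1), and $v_1=mv_0/m_0$ indeed lies in $[v_0,v)$. Your argument for (F1) is at the same level as the paper's Step~4: the pressure of chamber~2 is the curvature $\cos\alpha/h>0$ of its outer arc.

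The gap is (F2), which you leave open. It is not an auxiliary fact. For $v=v_0$ the lemma says exactly $g(\theta)\ge g(\theta_0)$ for $\theta\in[\theta_0,1]$, so you have reduced the lemma to its own fixed-area case plus the easy fact (F1). The crude route you sketch also cannot close it. For $\theta$ slightly larger than $\theta_0$ (or than $1/4$), the required inequality is an exact local monotonicity statement. Any continuous lower bound for $g$ that is strict at that point, such as one built from isoperimetric inequalities, fails nearby. You need the sign of $g'$.

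The missing idea is already contained in your proof of (F1). Apply the same Lagrange-multiplier reasoning to a transfer of area from chamber~2 to chamber~1 at fixed total area. The multipliers are the weighted exterior curvatures $p_1=(\cos\alpha+\sin\alpha)/h$ and $p_2=\cos\alpha/h$. This gives $g'(\theta)=p_1-p_2=\sin\alpha/h$, the curvature of the internal arc, where $\alpha\in\,]-\pi/4,\pi/2[\,$ is the angle of that arc at the junctions. Since $\alpha$ decreases strictly from $\pi/2$ to $-\pi/4$ as the area fraction goes from $0$ to $1$, $g$ increases and then decreases. Hence $\min_{[\theta_0,1]}g=\min\{g(\theta_0),g(1)\}$, with $g(1)=2\sqrt{2\pi}$.

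To compare these endpoints, use the explicit bubble with $\alpha=\pi/4$, $h=1$. Its arcs lie on circles centered at $(0,0)$, $(1,0)$, $(-1,0)$ with radii $1,\sqrt2,\sqrt2$. Its chambers have areas $\pi-1$ and $\pi+2$, and its cost is $3\sqrt2\,\pi$. Its area fraction $(\pi-1)/(2\pi+1)\approx 0.294$ exceeds $1/4$, and $\alpha\ge\pi/4>0$ for smaller fractions. So $g$ is increasing up to that point and
\[
g(\theta_0)~\le~\frac{3\sqrt2\,\pi}{\sqrt{2\pi+1}}~\approx~4.94~<~5.01~\approx~2\sqrt{2\pi}~=~g(1).
\]
This yields your sufficient condition for (F2) without computer assistance, and it is exactly the content of the paper's Steps~2--3.

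The paper then combines the pieces differently. At fixed total area $v$ it shows $\D(m,v-m)\ge\min\{\D(m_0,v-m_0),\D(v,0)\}$ and bounds both terms below by $\D(m_0,v_0-m_0)$. Your scaling case split is an equally valid way to combine (F1) and (F2).
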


\noindent{\bf Proof.}
Set
\[
G(m,v)\,\doteq\,\D(m,v-m),\qquad 0\leq m\leq v.
\]
Here $m$ is the area of the first chamber and $v$ is the total enclosed
area. Increasing $v$ with $m$ fixed adds area to the second chamber,
whereas increasing $m$ with $v$ fixed transfers area from the second
chamber to the first.

We shall prove that, for fixed $v$, the minimum of $G$ on $[m_0,v]$
is attained at one of its endpoints, and then compare both
endpoint values with $G(m_0,v_0)$.

\begin{figure}[ht]
\centerline{\hbox{\includegraphics[width=8cm]{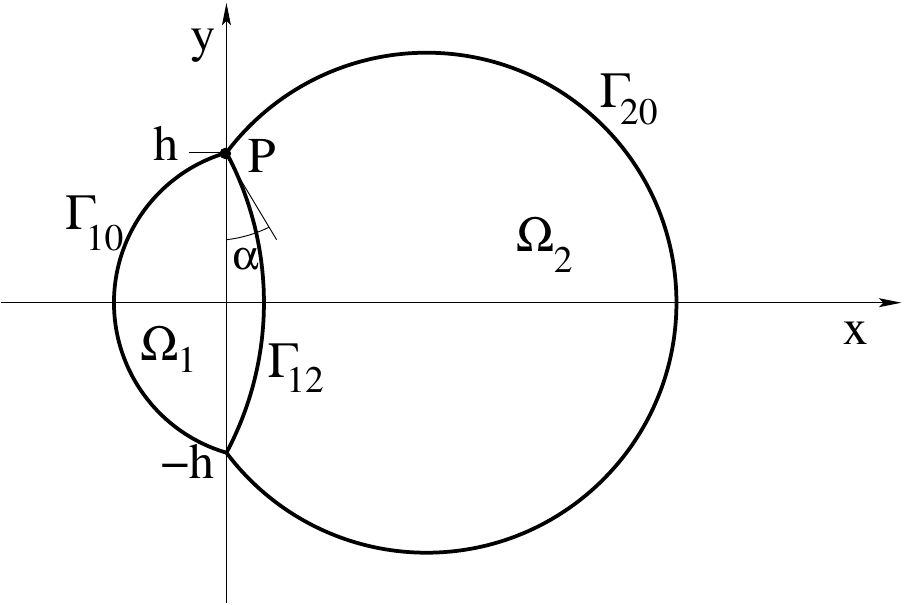}}}
\caption{\small The optimal shape of the double bubble.}
\label{f:fc209}
\end{figure}

We first recall the junction condition for a weighted double bubble;
see \cite[Section~6]{Lawlor}. At each junction, let $T_1,T_2,T_3$
be vectors tangent to $\Gamma_{10},\Gamma_{20},\Gamma_{12}$, respectively,
directed away from the junction, normalized so that
\[
|T_1|=\sqrt2,\qquad |T_2|=|T_3|=1.
\]
The necessary condition for optimality yields
\begin{equation}\label{eq:junction-balance}
T_1+T_2+T_3\,=\,0.
\end{equation}
Taking the scalar product of \eqref{eq:junction-balance} with $T_i$, $i_1, 2, 3$,  one obtains
\[\langle
T_2 , T_3\rangle\,=\,0,
\qquad \langle T_1, T_2\rangle\,=\,\langle T_1, T_3\rangle\,=-1.
\]
Thus the two arcs $\Gamma_{12}$ and $\Gamma_{20}$ having weight 1 meet orthogonally, while the arc 
$\Gamma_{10}$ with 
weight $\sqrt2$ forms an angle $3\pi/4$ with $\Gamma_{21}$ and $\Gamma_{20}$.

\medskip
\noindent{\bf Step 1.}
Place the two junctions at $(0,\pm h)$, with $h>0$.
Let $\alpha$ be the signed angle between the downward vertical
and the tangent to the internal arc at the upper junction,
positive when the arc bends into the second chamber (see Figure \ref{f:fc209}).

At the upper junction, the tangent relations just established give
\begin{equation}\label{eq:upper-tangents}
\begin{aligned}
T_3&=(\sin\alpha,-\cos\alpha),\\
T_2&=(\cos\alpha,\sin\alpha),\\
T_1&=(-\sin\alpha-\cos\alpha,\cos\alpha-\sin\alpha).
\end{aligned}
\end{equation}
Here the direction of $T_2$ is chosen along the exterior arc of
the second chamber, and $T_1$ then follows from
\eqref{eq:junction-balance}.
Since both junctions lie on the vertical axis, the exterior circles
of $\Omega_1$ and $\Omega_2$ have radii
\begin{equation}\label{eq:exterior-radii}
\frac{\sqrt2h}{\sin\alpha+\cos\alpha},
\qquad
\frac{h}{\cos\alpha},
\end{equation}
respectively. Imposing that these radii are positive, we obtain the parameter range 
$$-\pi/4<\alpha<\pi/2.$$

In this step, we prove that the area fraction
\[
q=\frac{|\Omega_1|}{|\Omega_1|+|\Omega_2|}
\]
is a smooth, strictly decreasing function of $\alpha$ on this
interval, with limits $1$ and $0$ at its two endpoints. This is a quite intuitive fact: when $\alpha$ increases, from Figure \ref{f:fc209} we expect the area fraction of the second chamber to increase. This can be justified by elementary calculations, or the following scaling/uniqueness argument. By contradiction, assume that $q(\alpha)$ is not a monotone function of $\alpha$. Then for two values $\alpha_1, \alpha_2$, one has $q(\alpha_1) = q(\alpha_2)$. However, we know that (see e.g. \cite{Lawlor})  that for a given $q$ there exists a unique double bubble satisfying the tangent relations \eqref{eq:junction-balance} with given total area. Since the angles $\alpha_i$ are not changed by dilations, this would contradict the above uniqueness of double bubbles. 

Thus the map $\alpha\mapsto q(\alpha) $ is a smooth bijection from
 $\,]-\pi/4,\pi/2[\,$ onto $\,]0,1[\,$ with nonvanishing derivative.
 The inverse function theorem gives a smooth inverse $q\mapsto \alpha(q)$,
 satisfying
 \[
 \alpha(0+)=\frac{\pi}{2},
 \qquad
 \alpha(1-)=-\frac{\pi}{4}.
 \]

\medskip
\noindent{\bf Step 2.}
We now prove that, for fixed $v$, the function $m\mapsto G(m,v)$
first increases and then decreases. This will imply
\[
G(m,v)\geq\min\{G(m_0,v),G(v,v)\},
\qquad m_0\leq m\leq v.
\]

We work with bubbles of prescribed total area $v$, translated so
that the junctions occur at the points $(0,\pm h)$. Their scale $h$ is allowed
to vary with $m$.

We calculate the derivatives of the function $\D(m, n)$, which can be done via a standard Lagrange-multiplier argument. In fact, by the Lagrange multiplier theorem, there exist $p_1, p_2$ such that, calling 
$$
\mathcal J(\Omega_1, \Omega_2)\,\doteq\, \H^1(\Gamma_{12}) + \H^1(\Gamma_{20})+ \sqrt 2 \cdot\H^1(\Gamma_{10})
$$
there holds, for an optimal configuration $\Omega_1, \Omega_2$ with areas $m, n$, that
$$
\delta \mathcal J(\Omega_1, \Omega_2) ~=~ p_1 \delta|\Omega_1| + p_2 \delta |\Omega_2|.
$$
Here $p_i$ must be the curvature of the boundary of the exterior arc of $\Omega_i$ multiplied by the weight at that point. By \eqref{eq:exterior-radii}, the curvatures of the two exterior
arcs, multiplied by their respective weights, are
\[
\frac{\cos\alpha+\sin\alpha}{h},
\qquad
\frac{\cos\alpha}{h}.
\]
Notice in passing that the curvature coefficient on the internal arc, for a displacement
from $\Omega_1$ towards $\Omega_2$, is
\[
\frac{\sin\alpha}{h}.
\]
In particular, this is the difference of the two weighted
exterior curvatures.
Thus
\begin{equation}\label{eq:bubble-first-variation}
d\D
=
\frac{\cos\alpha+\sin\alpha}{h}\,d|\Omega_1|
+
\frac{\cos\alpha}{h}\,d|\Omega_2|.
\end{equation}
Since $|\Omega_1|=m$ and $|\Omega_2|=v-m$, this becomes
\[
dG
=
\frac{\sin\alpha}{h}\,dm
+
\frac{\cos\alpha}{h}\,dv.
\]
Now Step~1 provides
a smooth family of minimizing configurations for $0<m<v$, parametrized by $\alpha(q) = \alpha(m/n)$.
Keeping $v$ fixed, we thus obtain
\begin{equation}\label{eq:G-m-derivative}
\frac{\partial G}{\partial m}(m,v)
=\frac{\sin\alpha(m/v)}{h}.
\end{equation}
Geometrically, transferring area from the second chamber to
the first increases the cost when the internal arc bends into
the second chamber. The derivative vanishes when the interface
is straight, and becomes negative when it bends into the first
chamber. Its sign therefore depends only on $\alpha$.

By Step~1, $\alpha(m/v)$ decreases strictly from $\pi/2$
to $-\pi/4$ as $m$ increases from $0$ to $v$. Since $h>0$, the right hand side of
\eqref{eq:G-m-derivative} is positive until the unique configuration
with $\alpha=0$, and becomes negative afterwards. Hence
$m\mapsto G(m,v)$ has exactly one interior maximum and no
interior minimum.

The function $G(m, v)$ can be easily calculated at the endpoints; in fact, when the total area is equal to $v$, the optimal configuration $G(0, v)$ is just a circle, with perimeter weighted by $1$, while $G(v, v)$ is again a circle, but with perimeter weighted by $\sqrt{2}$. This yields immediately
\[
G(0,v)=2\sqrt{\pi v},
\qquad
G(v,v)=2\sqrt{2\pi v}.
\]
Since $m \mapsto G(m ,v)$ has a unique local maximum in $[m_0, v]$, we obtain
\[
G(m,v)~\geq~\min\{G(m_0,v),G(v,v)\},
\qquad m_0\leq m\leq v,
\]
as claimed.

\medskip
\noindent{\bf Step 3.}
In this step we show that 
\[
G(m_0,v_0)
\,<\,G(v_0,v_0).
\]

Consider the bubble with $\alpha=\pi/4$ and $h=1$.
By the tangency relations \eqref{eq:upper-tangents}, the circles
carrying $\Gamma_{10}$, $\Gamma_{20}$ and $\Gamma_{12}$ have
centers $(0,0)$, $(1,0)$ and $(-1,0)$, and radii
$1$, $\sqrt2$ and $\sqrt2$, respectively.
% The first exterior arc is a semicircle, the second exterior arc
% is three quarters of a circle, and the internal arc is a quarter
% of a circle.

% The internal arc and the segment joining the two junctions enclose
% a circular segment of area $\pi/2-1$: this is a quarter-disk
% of radius $\sqrt2$ minus a triangle of area $1$.
% The region between the second exterior arc and the same segment
% has area $2\pi-(\pi/2-1)=3\pi/2+1$.
% Adding the internal circular segment to the first semicircle,
% and subtracting it from the latter region, gives
It is easy to see that in this case
\[
\begin{aligned}
|\Omega_1|&=\pi-1,\\
|\Omega_2|&=\pi+2.
\end{aligned}
\]
The lengths of the two exterior arcs and the internal arc are,
respectively,
\[
\Gamma_{1,0} = \pi,\qquad \Gamma_{20} = \frac{3\pi}{\sqrt2},
\qquad \Gamma_{12} = \frac{\pi}{\sqrt2}.
\]
Thus the total area is $v = 2\pi+1$ and the weighted perimeter is
\[
\D(\pi-1, \pi +2) = \sqrt2\,\pi+\frac{3\pi}{\sqrt2}
                 +\frac{\pi}{\sqrt2}=3\sqrt2\,\pi.
\]
Dilating this bubble to total area $v_0$, using \eqref{scaling}, gives
\[
G\left(\frac{\pi-1}{2\pi+1}v_0,v_0\right)
=3\sqrt2\,\pi\sqrt{\frac{v_0}{2\pi+1}}.
\]
Since
\[
\frac{m_0}{v_0}\leq\frac14
<\frac{\pi-1}{2\pi+1},
\]
by Step~1 we obtain that that $\alpha\geq\pi/4$ whenever
$$
0<m\leq \frac{(\pi-1)v_0}{(2\pi+1)} =: m^\star.
$$
By Step~2 and continuity at $m=0$, the map $m \mapsto G(m,v_0)$ is
increasing on $[0,m^\star]$.
Therefore we finally get
\[
G(m_0,v_0)
~\leq~ G\left(\frac{\pi-1}{2\pi+1}v_0,v_0\right)
~=~3\sqrt2\,\pi\sqrt{\frac{v_0}{2\pi+1}}
~<~2\sqrt{2\pi v_0}
~=~G(v_0,v_0).
\]

\medskip
\noindent{\bf Step 4.}
We now compare both endpoints from Step~2 with $G(m_0,v_0)$.
For $m_0>0$, the first-variation formula in Step~2 yields
\[
\frac{\partial G}{\partial v}(m_0,v)
~=~\frac{\cos\alpha(m_0/v)}{h}~>~0
\qquad \forall v\geq v_0,
\]
because $-\pi/4<\alpha<\pi/2$ and $h>0$.
Thus adding area to the second chamber while keeping the first
one fixed increases the minimum cost.
For $m_0=0$, the same monotonicity follows directly from
$G(0,v)=2\sqrt{\pi v}$.
In either case,
\[
G(m_0,v)\geq G(m_0,v_0).
\]
Concerning the other endpoint, Step~3 and $v\geq v_0$ give
\[
G(v,v)~=~2\sqrt{2\pi v}
~\geq~ 2\sqrt{2\pi v_0}
~>~G(m_0,v_0).
\]
Combining these inequalities with the endpoint-minimum property
established in Step~2, we obtain
\[
G(m,v)
~\geq~\min\bigl\{G(m_0,v),G(v,v)\bigr\}
~\geq~ G(m_0,v_0),
\]
which is the desired conclusion.
\endproof

\subsection{An improved lower bound}
Define
\begin{equation}\label{Kdef}
\begin{aligned}
 K&\,\doteq\,\int_0^{1/2}
       \D\!\left(s^2,s+\frac14\right)\,ds,\\
 \mu&\,\doteq\,\int_0^{1/2}
       \left(\frac12+\sqrt2\,s\right)\,ds
       =\frac{2+\sqrt2}{8},
\end{aligned}
\end{equation}
and set
\begin{equation}\label{Lambda}
 c_*\,\doteq\,\frac{K^2}{4},
 \qquad
 \Lambda\,\doteq\,\frac{K^2}{4\mu}
       =(2-\sqrt2)K^2.
\end{equation}
The two arguments of $\D$ correspond to the area at the earlier
time and the area gained between the two times, since
$(s+1/2)^2-s^2=s+1/4$.
A numerical evaluation of the integral defining $K$ gives that the constant $\Lambda$ yields
$$
\Lambda \approx 1.62621170228\ldots
$$

\begin{theorem}\label{thm:main}
Let $\gamma(\cdot)$ be a $\lambda$-admissible strategy and let
$A(t)=\caL^2(R^\gamma_t)$, with $A(0)>0$.
If $\lambda\leq\Lambda$, then
\begin{equation}\label{compar}
 A(t)>c_*t^2\qquad\hbox{for every }t>0.
\end{equation}
In particular, the fire cannot be blocked.
\end{theorem}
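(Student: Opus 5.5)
Our approach is a continuity (first-crossing) argument combined with the two-time estimate, integrated over pairs of times $t$ and $t+T/2$. Since $A(0)>0$, the inequality $A(t)>c_*t^2$ holds for small $t$. Arguing by contradiction, let $T>0$ be the first time at which $A(T)=c_*T^2$; $A$ is nondecreasing, and we may take $A$ left-continuous, or else work with $\liminf$. Then $A(t)\ge c_*t^2$ for all $t\le T$, with $A(T)=c_*T^2$. We will show that this forces $A(T)>c_*T^2$.

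For $s\in[0,1/2]$ set $a=sT$ and $b=(s+\tfrac12)T$, choosing $s$ so that $a,b\notin N$ and Proposition \ref{prop:arearate} applies; this holds for a.e.\ $s$. Combining Remark \ref{r:41} with Proposition \ref{prop:wall} gives
$$F(sT)+F((s+\tfrac12)T)\;\ge\;\D\bigl(A(a),A(b)-A(a)\bigr)-\lambda T\bigl(\tfrac12+\sqrt2\,s\bigr).$$
To bound the double-bubble term from below, we apply Lemma \ref{l:mono} with $m=A(a)$, $v=A(b)$, $m_0=c_*a^2=c_*s^2T^2$ and $v_0=c_*b^2=c_*(s+\tfrac12)^2T^2$. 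The hypotheses check out: $m\ge m_0$ and $v\ge v_0$ because $a,b\le T$, and $m_0\le v_0/4$ because $s\le\tfrac12(s+\tfrac12)$ for $s\le1/2$. The lemma then gives $\D(A(a),A(b)-A(a))\ge \D(m_0,v_0-m_0)$. Using the scaling property (\ref{scaling}) and $(s+\tfrac12)^2-s^2=s+\tfrac14$, this equals $\sqrt{c_*}\,T\,\D(s^2,s+\tfrac14)$.

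Next we integrate over $s\in[0,1/2]$. Proposition \ref{prop:arearate}, applied on $[0,T/2]$ and on $[T/2,T]$, gives
$$A(T)-A(0)\;\ge\;\int_0^{T}F\;=\;T\int_0^{1/2}\bigl[F(sT)+F((s+\tfrac12)T)\bigr]ds\;\ge\;T^2\bigl(\sqrt{c_*}\,K-\lambda\mu\bigr).$$
With $c_*=K^2/4$ we have $\sqrt{c_*}K=K^2/2=2c_*$. Since $\lambda\le\Lambda=K^2/(4\mu)$, we get $\lambda\mu\le c_*$, so the bracket is at least $c_*$. Hence $A(T)\ge A(0)+c_*T^2>c_*T^2$, contradicting the choice of $T$. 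The final claim follows because $A(t)\to\infty$, so $R^\gamma_t$ cannot stay inside a fixed ball.

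The main obstacle is making the first-crossing argument rigorous. This needs some regularity of $A$ together with the measurability and integrability of $F$ over $s$. The strictness already comes from $A(0)>0$, so we do not need a sharper inequality. We also need to make sure Proposition \ref{prop:wall} and Remark \ref{r:41} may be applied for a.e.\ $s$, i.e.\ that the exceptional set $N$ is null; we would verify this, or avoid a first crossing altogether by running the same estimate directly with $\inf_{t\le T}A(t)/t^2$. Lemma \ref{l:mono} requires $v_0>0$, which holds here. If $A$ has jumps, they only help, since the inequality (\ref{eq:areaineq}) is stated for arbitrary $\tau_1<\tau_2$.
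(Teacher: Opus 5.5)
Your proposal is correct and follows the paper's proof essentially step for step. It uses the same first-crossing time $T$, the same pairing of $t$ with $t+T/2$ (your $s=t/T$), the same application of Lemma~\ref{l:mono} with $m_0=c_*a^2$, $v_0=c_*b^2$, $v=A(b)$, and the same integration giving $A(T)\ge A(0)+c_*T^2$. The regularity worry you raise is unnecessary: setting $T=\inf\{t>0:\,A(t)\le c_*t^2\}$ and using only that $A$ is nondecreasing already gives $A(t)>c_*t^2$ for $t<T$ and $A(T)=c_*T^2$.
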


\noindent{\bf Proof.}
Suppose that \eqref{compar} fails and let
\[
 T=\inf\{t>0:\ A(t)\leq c_*t^2\}.
\]
Since $A$ is nondecreasing and $A(0)>0$, we have $T>0$ and
\begin{equation}\label{eq:contact}
 A(T)=c_*T^2,
 \qquad A(t)>c_*t^2\quad\hbox{for }0<t<T.
\end{equation}

For almost every $t\in(0,T/2)$, both $t$ and $t+T/2$ are good
times for Proposition \ref{prop:arearate}. Since
\[
 \frac{c_*t^2}{c_*(t+T/2)^2}\leq\frac14,
\]
we can apply Lemma~\ref{l:mono} with $m = A(t)$, $v = A(T)$. Combining the lemma with
\eqref{eq:bubble-lower}, Proposition~\ref{prop:wall}, and
the scaling identity \eqref{scaling}, we obtain
 \bel{eq:pair}\bega{rl} \ds
 \sqrt{c_*}\,T\,
 \D\!\left((t/T)^2,\,t/T+\frac14\right)
 &\leq~\D\Big(A(t),\,A(t+T/2)-A(t)\Big)\notag\\[4mm]
 &\leq \ds~F(t)+F(t+T/2)
       +\lambda\left(\frac T2+\sqrt2\,t\right).
\enda\eeq
Integrating over $(0,T/2)$ and using \eqref{eq:areaineq} to estimate the right hand side gives
\begin{align}
 \sqrt{c_*}\,KT^2
 &\leq\int_0^T F(t)\,dt+\lambda\mu T^2\notag\\[1mm]
 &\leq A(T)-A(0)+\lambda\mu T^2\notag\\[1mm]
 &= (c_*+\lambda\mu)T^2-A(0).
 \label{eq:key}
\end{align}
But $\sqrt{c_*}\,K=2c_*$ and
$\lambda\mu\leq\Lambda\mu=c_*$, hence \eqref{eq:key} implies
\[
 2c_*T^2~\leq ~2c_*T^2-A(0),
\]
contradicting $A(0)>0$.
\endproof

The choice of $c_*$ is optimal within this quadratic comparison
argument. Indeed, with a general coefficient $c>0$, the same
proof gives a contradiction whenever
\[
 \lambda\leq\frac{\sqrt c\,K-c}{\mu}
 =\Lambda-\frac{(\sqrt c-K/2)^2}{\mu}.
\]
The largest threshold is therefore $\Lambda$, attained at $c=c_*$.

\section{Barriers which are Jordan curves}
\label{sec:5}
\setcounter{equation}{0}

In this final section we examine the case where the entire barrier $\Gamma=\cup_{t>0} \gamma(t)$ 
is a Jordan curve.  When $\lambda>2$ it is well known \cite{B} that there exists an admissible strategy
which constructs  two
arcs of logarithmic spirals and completely surrounds the fire. Once completed, the entire barrier is a Jordan curve.

On the other hand, the following intuitive argument (already sketched in \cite{BrP}) shows that a strategy which block the fire by  
constructing  a Jordan curve
can be admissible only
if the construction speed is $\lambda>2$.

Assume that the fire starts from the unit ball centered at the origin.  As shown in Fig.~\ref{f:fc211}, let
 $\bar x\in \Gamma$ be the location where the last brick of the wall is put in place, say at time 
 $T$.   
Then the entire wall  must be constructed before the fire can reach the point $\bar x$.  Since $\lambda $ is the construction rate, this implies $$\lambda T
\,\geq\, \H^1(\Gamma).$$
Let $\gamma$ be a path with minimum length, contained inside the region bounded by $\Gamma$, joining the 
origin with $\bar x$.   By minimality, the first portion of $\gamma$ must be a segment.   We prolong this segment 
backward until it crosses a first point $x_0\in\Gamma$.   Call $\gamma_0$ this longer path connecting
$x_0$ with $\bar x$.   Moreover, call $\Gamma_1, \Gamma_2$ respectively the two portions of $\Gamma$,
covered when traveling
from $x_0$ to $\bar x$ clockwise and counterclockwise, respectively.
Since $\Gamma$ is a simple closed curve, the lengths of these paths satisfy
$$\H^1(\gamma)+1 ~\leq~\H^1(\gamma_0)~\leq~\min\bigl\{ \H^1(\Gamma_1),\,\H^1(\Gamma_2)\bigr\}.$$
Since $T\leq \H^1(\gamma)$ (otherwise the fire escapes from $\bar x$ before the wall is completed),
we thus conclude
$$\lambda T~=~\H^1(\Gamma)~\geq~2\bigl(\H^1(\gamma) +1\bigr)~\geq~2(T+1).$$
Hence $\lambda>2$.

We observe, however, that no contradiction is reached if $\lambda>1$ and the barrier $\Gamma$
is not a simple closed curve. 

\begin{figure}[ht]
\centerline{\hbox{\includegraphics[width=10cm]{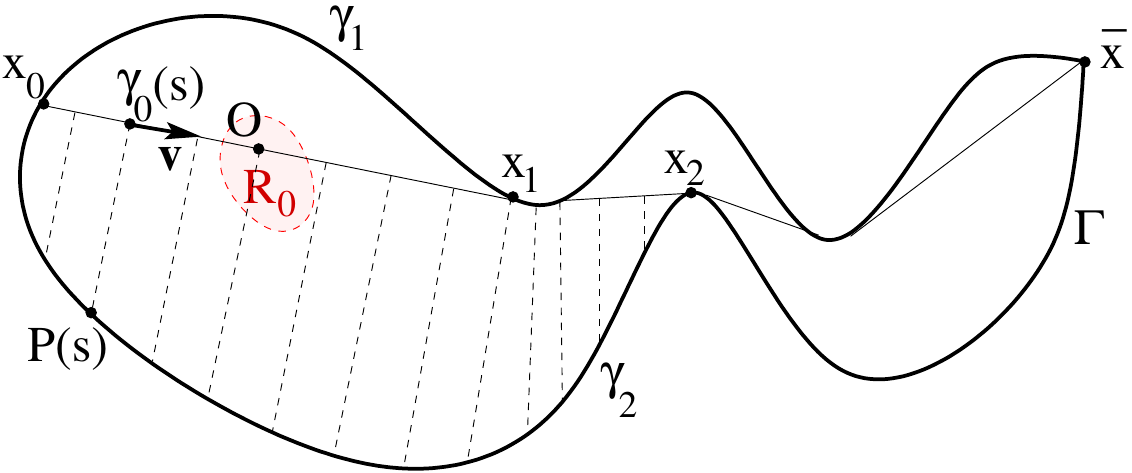}}}
\caption{\small If the entire barrier $\Gamma$ is a simple closed curve, 
then $\Gamma=\Gamma_1\cup\Gamma_2$, where each of the two portions has length greater
than the length of $\gamma_0$.  Therefore, the time $T$ needed for the fire to reach any point of $\Gamma$
is strictly smaller than half of the length of $\Gamma$. }
\label{f:fc211}
\end{figure}

 \v
Toward a more rigorous proof, we recall some results from \cite{BW1}.
Let $\Gamma\subset\R^2$ be a complete rectifiable set.
The set reached by the fire  at time $t$ in the presence of the barrier
$\Gamma$ is then defined as
\bel{61} \begin{array}{rl}R^\Gamma_t~\doteq&\Big\{ x(t)\,;~~x(\cdot)~~
\hbox{is absolutely continuous}\,,~~
x(0)\in R_0\,,\cr
&\qquad\quad |\dot x(\tau)|\leq 1 ~~\hbox{for a.e.}~
\tau\in [0,t]\,,\qquad x(\tau)\notin \Gamma~~
\hbox{for all}~\tau\in [0,t]\Big\}.
\end{array}\eeq
We also define $R^\Gamma_\infty\doteq \bigcup_{t
\geq 0}
R^\Gamma_t$.
We say that the complete, rectifiable set
$\Gamma$ is {\em $\lambda$-admissible} if, for every $t\geq 0$,
the set
\bel{6.2}\gamma(t)~\doteq ~\Gamma\cap \overline{R^\Gamma(t)}\eeq
satisfies (\ref{1.4}), i.e.~it
can be constructed within time $t$.
We now consider:
\v
{\bf (BP$^\prime$)} {\it Find an admissible rectifiable set
$\Gamma\subset\R^2$ such that the corresponding reachable set $R_\infty^\Gamma$
is bounded.}

The main result in \cite{BW1} shows that, under the assumptions {\bf (A1)--(A3)},
the two formulations of the blocking problem
{\bf (BP)} and {\bf (BP$^\prime$)} are equivalent.

Given a
rectifiable set $\Gamma$, we define the corresponding multifunction
$t\mapsto
\gamma(t)$ according to (\ref{6.2}).
Conversely, given an admissible strategy $\gamma(\cdot)$,
we consider the set
\bel{Geq}\Gamma~\doteq ~\hbox{completion of}~~(\gamma_\infty\setminus
R^\gamma_\infty)\,,\qquad \gamma_\infty\doteq  \bigcup_{t>0} \gamma(t),~~R^\gamma_\infty\doteq   \bigcup_{t>0} R^\gamma_t\,. \eeq
The following theorem was proved in \cite{BW1}.

\begin{theorem} In the above setting,
there exists an admissible strategy $t\mapsto \gamma(t)$ with ${\H^1}(\gamma_\infty)<\infty$
which solves the blocking problem {\bf (BP)} if and only if
there exists an admissible rectifiable set $\Gamma$ which solves {\bf (BP$^\prime$)}.
\end{theorem}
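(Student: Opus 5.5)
We prove the two implications separately, using the correspondence between strategies and sets given by (\ref{6.2}) and (\ref{Geq}).

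\emph{From (BP$^\prime$) to (BP).} Let $\Gamma$ be an admissible complete rectifiable set with $R^\Gamma_\infty$ bounded. Define $\gamma(t)\doteq \Gamma\cap\overline{R^\Gamma_t}$.

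First, $\gamma(t)$ is increasing in $t$, because $R^\Gamma_t$ is. Second, (\ref{1.4}) holds by the definition of admissibility of $\Gamma$. Hence {\bf (A1)--(A2)} hold.

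Next we compare reachable sets and show $R^\gamma_t\subseteq R^\Gamma_t$. Let $x(\cdot)$ be a trajectory avoiding $\gamma(\tau)$ for all $\tau$. As long as $x(\tau)\in \overline{R^\Gamma_\tau}$, the condition $x(\tau)\notin\gamma(\tau)$ is the same as $x(\tau)\notin\Gamma$. A first-exit-time argument then shows the trajectory can never leave $\overline{R^\Gamma_\tau}$ through a point not on $\Gamma$. It is here that completeness of $\Gamma$ is needed, to handle density points where the trajectory might slip through.

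Since $R^\gamma_t\subseteq R^\Gamma_t\subseteq R^\Gamma_\infty$, the reachable sets are bounded. Moreover $\gamma_\infty\subseteq\Gamma$, so ${\H^1}(\gamma_\infty)<\infty$.

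\emph{From (BP) to (BP$^\prime$).} Let $\gamma(\cdot)$ be a blocking strategy with ${\H^1}(\gamma_\infty)<\infty$, and define $\Gamma$ by (\ref{Geq}). Portions of wall lying inside the burned region $R^\gamma_\infty$ are discarded. Passing to the completion does not change ${\H^1}$, so ${\H^1}(\Gamma)<\infty$.

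We must prove two things.

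(i) $R^\Gamma_t\subseteq R^\gamma_t$ for every $t$, hence $R^\Gamma_\infty$ is bounded. Take a trajectory $x(\cdot)$ avoiding $\Gamma$, and suppose it first enters the complement of $R^\gamma$ at some time $\tau$. At that time it crosses $\gamma(\tau)$ at a point not in $R^\gamma_\infty$. Such a point belongs to $\Gamma$, which is a contradiction.

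(ii) $\Gamma$ is admissible, i.e.\ $\H^1\bigl(\Gamma\cap\overline{R^\Gamma_t}\bigr)\le\lambda t$. By (i), $\Gamma\cap\overline{R^\Gamma_t}\subseteq \Gamma\cap\overline{R^\gamma_t}$. It then suffices to show that, up to $\H^1$-null sets, $\Gamma\cap\overline{R^\gamma_t}\subseteq\gamma(t)$.

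To see this, take a point $x\in\Gamma\cap\overline{R^\gamma_t}$ that is not in $\gamma(t)$. Then the fire reaches $x$ at time $t$ without a wall there. We would like to conclude that $x$ is then burned, i.e.\ lies in $R^\gamma_\infty$. This step is where the $SBV$ structure of $u^\gamma$ and the completeness of $\gamma$ enter (jump set contained in $\gamma(s)$). Fubini-type arguments as in Proposition~\ref{prop:arearate} would show the exceptional set is $\H^1$-null.

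\emph{Main obstacle.} The delicate points are the measure-theoretic ones. In one direction we must show that closures of reachable sets meet the barrier only in already-built wall. In the other we must show that trajectories cannot slip through $\H^1$-small gaps. I would first reduce to complete strategies, as in \cite{BDL}, and then handle the exceptional sets via density arguments.
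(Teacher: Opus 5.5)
The paper does not prove this statement; it quotes it from \cite{BW1}. Your proposal therefore has to stand on its own, and as written it does not. The two-directional structure through (\ref{6.2}) and (\ref{Geq}) is the natural one. However, each substantive claim is asserted rather than proved: the two reachable-set inclusions and the admissibility of $\Gamma$ are left to a ``first-exit-time argument'', to completeness, and to unspecified ``Fubini-type arguments''.

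\textbf{The missing idea: a local keep-up property.} In (BP$^\prime$)$\Rightarrow$(BP), let $\sigma$ be the first time the path leaves $\overline{R^\Gamma_\tau}$. Your reasoning yields only $x(\sigma)\notin\Gamma$ and $x(\sigma)\in R^\Gamma_\sigma$. It does not exclude that, right after $\sigma$, the path crosses points of $\Gamma$ not yet in $\overline{R^\Gamma_\tau}$, hence not in $\gamma(\tau)$, and runs ahead of the fire. Because complete rectifiable sets need not be closed, completeness alone cannot exclude this. Here is an example.
\begin{itemize}
\item Let the fire start from the closed half-plane $\{x_1\le 0\}$, and take $\Gamma=\bigcup_{j\ge J}\{2^{-j}\}\times[-2^{-j}/j,\,2^{-j}/j]$. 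This set is complete and has zero density at the origin.
\item The path $x(\tau)=(\tau,0)$ meets $\Gamma$ only at the points $(2^{-j},0)$, at times $2^{-j}$.
\item Any $\Gamma$-avoiding path ending near $(2^{-j},0)$ must cross the line $x_1=2^{-j-1}$ above or below the $(j+1)$-th segment. It therefore needs time at least $2^{-j-1}\bigl(1+\sqrt{1+(j+1)^{-2}}\bigr)>2^{-j}$.
\item Hence $x(\cdot)$ avoids $\gamma(\tau)$ for every $\tau$, yet $x(\tau)\notin\overline{R^\Gamma_\tau}$ for every $\tau>0$.
\end{itemize}
Your argument never uses that $R_0$ is open, so it would apply verbatim to this example; it cannot be correct as stated.

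What rescues the theorem is a uniform slack. Since $x(\delta)\in R_0$ for small $\delta>0$, the time-shifted path $\tau\mapsto x(\tau+\delta)$ shows $x(\tau)\in R^\Gamma_{\tau-\delta}$ for all $\tau\le\sigma$. This slack must then be played against quantitative zero-density estimates at $x(\sigma)$. By the Eilenberg inequality, most circles around $x(\sigma)$ miss $\Gamma$, and so do most radial segments between consecutive such circles. Detours therefore cost $o(r)$ at scales $r\ll\delta$. You would also need to show that the path cannot enter a region enclosed by $\Gamma$ except through a point already in the closure of the fire. None of this appears in your proposal.

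\textbf{The converse direction.} Step (i) has the same gap. The sentence ``at that time it crosses $\gamma(\tau)$'' is exactly the unproved keep-up property, now for the non-closed, time-dependent sets $\gamma(\cdot)$. At the exit time you only know $x(\tau)\in\overline{R^\gamma_\tau}$. The path $x$ itself cannot serve as a witness, because it may cross walls of $\gamma$ lying in $R^\gamma_\infty$, which are exactly the walls discarded in (\ref{Geq}).

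In step (ii), the inclusion $(\gamma_\infty\setminus R^\gamma_\infty)\cap\overline{R^\gamma_t}\subseteq\gamma(t)$ is only hoped for. It is reachable without the $SBV$ machinery, by the same kind of geometric argument:
\begin{itemize}
\item Since $\gamma(t)=\bigcap_{s>t}\gamma(s)$, a point $x\notin\gamma(t)$ lies outside $W=\gamma(t+1/k)$ for some $k$.
\item $W$ is complete, so it has zero density at $x$.
\item Chaining $W$-free circles and $W$-free radial segments from a nearby burned point reaches $x$ by time $t+1/k$, so $x\in R^\gamma_\infty$.
\end{itemize}
An argument of this quantitative type, using zero density off the barrier together with the slack coming from the openness of $R_0$, is what each of your three steps is missing.
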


We can now prove:
\begin{theorem}\label{t:52}
Let $\Gamma$ be a Jordan curve with finite length, blocking a fire initially starting on a bounded open set $R_0$.
Then $\Gamma$ can be $\lambda$-admissible only if $\lambda>2$.\end{theorem}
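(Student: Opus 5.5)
We make rigorous the intuitive argument sketched before the statement, working with the formulation {\bf (BP$^\prime$)}: the set reached by the fire is $R^\Gamma_t$ from (\ref{61}), and the wall built by time $t$ is $\gamma(t)=\Gamma\cap\overline{R^\Gamma_t}$ from (\ref{6.2}).

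\emph{Setup.} Since $\Gamma$ blocks the fire and is a Jordan curve, $R_0\setminus\Gamma$ must lie in the bounded component $\Omega$ of $\R^2\setminus\Gamma$. Otherwise some point of $R_0$ would lie in the unbounded component, and the fire would escape to infinity.

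Define the intrinsic distance
$$d_\Omega(x)\doteq\inf\{\hbox{length of paths in }\Omega\hbox{ from }R_0\cap\Omega\hbox{ to }x\}.$$
This extends continuously to $\overline\Omega$, and
$$\overline{R^\Gamma_t}\cap\Gamma=\{x\in\Gamma:\ d_\Omega(x)\le t\}.$$
Set
$$T\doteq\max_{x\in\Gamma} d_\Omega(x),$$
which is finite because $\Gamma$ is rectifiable and $\Omega$ is a Jordan domain. Pick $\bar x\in\Gamma$ attaining this maximum. Then $\gamma(T)=\Gamma$, and admissibility (\ref{1.4}) gives
$$\H^1(\Gamma)\le\lambda T.$$

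\emph{Key geometric inequality.} The goal is to show $\H^1(\Gamma)\ge 2T+2\delta$ for some $\delta>0$ depending only on $R_0$. The argument goes in three steps.

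First, take a shortest path $\sigma$ in $\overline\Omega$ from $\overline{R_0}$ to $\bar x$. It exists by Arzel\`a--Ascoli, since $\Gamma$ is rectifiable and $\overline\Omega$ is compact. Its length is $T$.

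Second, $\sigma$ leaves $\overline{R_0}$ from some point $y$. Fix a small ball $B(z,r)\subset R_0\cap\Omega$. Going backward from $\sigma$ into $R_0$, we obtain a path from $\Gamma$ to $\bar x$ passing through $R_0$. Concretely, take a segment from $\bar x$'s geodesic through the ball to the first hit $x_0\in\Gamma$ in the backward direction, as in the heuristic. This path has length at least $T+\delta$ with $\delta>0$: indeed every point of $\Gamma$ has positive distance from the open set, so $\delta\ge d(\Gamma, B(z,r))>0$ after routing the geodesic through the ball. Simplest is to assume $R_0$ is open, so that $y$ can be pushed inward and $\sigma$ extended backward inside $R_0$ by a segment before it reaches $\Gamma$.

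Third, let $x_0\in\Gamma$ be the other endpoint. The two arcs $\Gamma_1,\Gamma_2$ of $\Gamma$ joining $x_0$ to $\bar x$ are curves in $\overline\Omega$ from $x_0$ to $\bar x$. The crucial claim is that each has length at least $T+\delta$. To see this, concatenate a short path from $R_0$ to near $x_0$ (of length $\approx d_\Omega(x_0)$) with $\Gamma_i$. Using maximality of $d_\Omega(\bar x)$ gives a weaker bound, so instead argue directly: $d_\Omega$ is $1$-Lipschitz along $\Gamma_i$ with respect to arclength. Along $\Gamma_i$, $d_\Omega$ must increase from $d_\Omega(x_0)$ to $T$, hence
$$\H^1(\Gamma_i)\ge T-d_\Omega(x_0).$$
The stronger bound comes from choosing $x_0$ with $d_\Omega(x_0)$ tiny. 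That is, choose $x_0\in\Gamma\cap\partial R_0$-adjacent if possible. In general, take $x_0$ as the point of $\Gamma$ nearest to $R_0$. This yields
$$\H^1(\Gamma)=\H^1(\Gamma_1)+\H^1(\Gamma_2)\ge 2\bigl(T-d_\Omega(x_0)\bigr).$$

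\emph{Main obstacle: strictness.} The crude bound above gives $\lambda\ge 2(T-\eta)/T$, which is not strict. To obtain $\lambda>2$ I will exploit the shortest-path structure as in the heuristic. The initial portion of $\sigma$ from $y\in\partial R_0$ is a straight segment, so it can be extended backward through the open set $R_0$ by a length $\delta>0$ until it meets $\Gamma$ at $x_0$. The extended path from $x_0$ to $\bar x$ then has length at least $T+\delta$ and is in fact shortest between its endpoints in $\overline\Omega$. This is the delicate point: one must show that extending a geodesic segment backward through the convex-in-a-neighborhood region keeps minimality, or at least that any path in $\overline\Omega$ from $x_0$ to $\bar x$ has length $\ge T+\delta'$. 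I would argue the latter by noting that any such path, after the first time it exits $\overline{R_0}$, costs at least $T$ by definition of $T=d_\Omega(\bar x)$. Before that exit it travels from $x_0\in\Gamma$ to $\partial R_0$, which costs at least $\mathrm{dist}(\Gamma,\,\cdot)$. So I would pick $x_0$ to be a point of $\Gamma\cap\overline{R_0}$ if $\Gamma$ enters $R_0$, and otherwise use $\delta=\mathrm{dist}(x_0,\overline{R_0})$. Care is needed to ensure $\delta>0$, and I would handle the case $\Gamma\cap\overline{R_0}\neq\emptyset$ separately: points of $\Gamma$ inside $R_0$ burn at time $0$, yet must be built with positive length at $t=0$, contradicting (\ref{1.4}). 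Hence $\Gamma\cap\overline{R_0}=\emptyset$ up to null sets, which gives $\delta>0$. Then
$$\lambda T\ge\H^1(\Gamma)\ge 2(T+\delta)>2T,$$
so $\lambda>2$.
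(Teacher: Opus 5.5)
\textbf{There is a genuine gap: the strict inequality is never established.} Your reduction to $\H^1(\Gamma)\le\lambda T$ with $T=\max_\Gamma d_\Omega$ is essentially fine, and so is the triangle-inequality bound $\H^1(\Gamma_i)\ge T-d_\Omega(x_0)$. But everything hinges on the claim that each arc $\Gamma_i$ from $x_0$ to $\bar x$ has length at least $T+\delta$. That is, the backward-prolonged geodesic must still be a shortest path in $\overline\Omega$ between $x_0$ and $\bar x$.

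Your justification splits a competing path at ``the first time it exits $\overline{R_0}$''. However, the competitors you actually need to control, $\Gamma_1$ and $\Gamma_2$, lie on $\Gamma$ and never enter $R_0$ (you argue yourself that $\Gamma\cap R_0=\emptyset$). For them you are left with $\H^1(\Gamma_i)\ge T-d_\Omega(x_0)$ only. At best this gives $\lambda\ge 2$, not $\lambda>2$; the best case is when $\Gamma$ touches $\partial R_0$ in an $\H^1$-null set, which admissibility at $t=0$ does not exclude.

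The missing ingredient is a global fact that genuinely uses simple connectedness of $\Omega$. For instance: in the closure of a Jordan domain with its intrinsic metric, locally shortest paths are globally shortest (uniqueness of geodesics, or the CAT(0) property). Then prolonging a minimizing segment backward through an interior point keeps it minimizing. Your argument never invokes simple connectedness at this step.

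The same paragraph has further loose ends. ``$\Gamma\cap\overline{R_0}=\emptyset$ up to null sets'' does not give $\mathrm{dist}(\Gamma,\overline{R_0})>0$. Also, for a general open $R_0$ the initial piece of $\sigma$ at $y\in\partial R_0$ need not be a segment that extends backward into $R_0$; if $y\in\Gamma$ there may be no segment at all. You should start from the centre of a ball contained in $R_0$ instead.

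The paper avoids geodesic prolongation entirely:
\begin{itemize}
\item It fixes $B(0,r)\subset R_0$ and uses Lemma~2 of \cite{BW2}: in a Jordan domain $E$, the intrinsic diameter of $\overline E$ is attained at pairs of boundary points.
\item Two points of $\Gamma$ are joined by the shorter arc of $\Gamma$, so $d_E(0,\bar x)\le \ell/2$ for every $\bar x\in\Gamma$, where $\ell=\H^1(\Gamma)$.
\item Discarding the initial piece of this path inside $B(0,r)$ shows that $\bar x$ is reached within time at most $(\ell-r)/2$.
\item An approximation by the domains $E_\delta$ produces fire trajectories that actually avoid $\Gamma$.
\end{itemize}
Hence $\Gamma\subset\overline{R^\Gamma_T}$ for some $T<\ell/2$. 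Strictness comes from the interior ball, not from a positive distance between $\Gamma$ and $R_0$, and simple connectedness enters through the boundary-diameter lemma. To complete your own route you would have to prove minimality of the prolonged geodesic, which is of comparable difficulty to that lemma.
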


{\bf Proof.} 
{\bf 1.} 
Let $\ell=\H^1(\Gamma)$.   We will show that every point $x\in\Gamma$ can be reached by the fire
within time $T \doteq (\ell-r)/2$. This  implies that 
$\ov{R^\Gamma(T)}\,\supset\, \Gamma.$
Hence
$$\H^1 \bigl(  \ov{R^\Gamma(T)}\cap\Gamma\bigr) ~=~\H^1(\Gamma)~=~\ell~>~2T.$$
As a consequence, if  $\lambda\leq 2$, then  $\Gamma$ is not $\lambda$-admissible 
\v
{\bf 2.} 
Denote by $E$ the bounded, open, connected component of $\mathbb R^2 \setminus \Gamma$.
Note that its boundary is $\partial E=\Gamma$, 
while its closure is $\ov E=E\cup \Gamma$.
By a translation, we can assume that the initial burned set $R_0\subset E$ contains the open ball 
$B(0,r)$ centered at the origin with radius $r$. 
We consider the path distance between points in $\ov E$,
$$\bega{rl} 
d_E(x_1, x_2) &\ds\doteq~\min\bigg\{ \int_0^1 \bigl|\dot x(t)\bigr|\,d t\,;\quad x:[0,1]\mapsto \ov E~
\hbox{ is an absolutely continuous path}\\[3mm]
&\qquad\qquad\qquad  \hbox{with}~~
x(0) = x_1, ~~ x(1) = x_2\,\bigg\}.\enda
$$
Since $\Gamma$ has bounded length, the existence of a minimizing path follows from a standard compactness argument.

As proved in Lemma~2 of \cite{BW2}, since the Jordan domain $E$ is simply connected,
the maximum distance between two points in $\ov E$ is achieved on the boundary:
\begin{equation}\label{eq:boundarydist}
\sup_{x_1,x_2 \in \ov E} d_E(x_1, x_2)  ~=~ \sup_{x_1,x_2 \in \Gamma} d_E(x_1, x_2).
\end{equation}
\v
{\bf 3.} 
Given any two distinct points  $x_1,x_2\in\Gamma$, let
$\Gamma_1,\Gamma_2\subset\Gamma$
be the arcs travelled by moving along $\Gamma$ from $x_1$ to $x_2$ in the clockwise and in the counterclockwise direction, respectively. Clearly $ \ell=\H^1(\Gamma) =\H^1(\Gamma_1) + \H^1(\Gamma_2)$,
and hence
\bel{perest}
d_E(x_1,x_2)~\leq~\min \bigl\{  \H^1(\Gamma_1) + \H^1(\Gamma_2)\bigr\} ~\leq~{\ell\over 2}\,.\eeq

{\bf 4.}  Next, consider any point $\bar x \in \Gamma$. 
%We claim that it is reached by the fire starting from $R_0$  within time 
%$$T~=~{\ell-r\over 2} \,.$$
%
By  \eqref{perest} there is a path $\gamma^{\bar x}:[0,1]\mapsto \ov E$ starting from the origin and reaching $\bar x$,  with length $\le \ell/2$.
%$$
%d_E(0, \bar x) \, \leq\, \frac{\ell}{2}.
%$$
Since $R_0$ contains a ball of radius $r$ centered at $0$, the path $\gamma^{\bar x}$ contains an initial
arc  connecting the origin to a point $x^*\in R_0$ with $|x^*|=r/2$.  Removing this arc, we obtain a shorter path
$\gamma^*$ connecting $x^*$ with $\bar x$, with length $\leq (\ell-r)/2$.
Hence
$$d_{E}(x^*, \bar x) ~\leq ~\frac{\ell-r}{2}\,.
$$
\v
{\bf 5.}
 In this final step we prove that, for any $\ve>0$, there exists a
path $\gamma_\ve$ starting inside $R_0$ and reaching a point $\bar x_\ve\in B(\bar x, \ve)$
within time $\leq (\ell-r)/2$, without ever touching the barrier $\Gamma$.

For sufficiently small $\delta>0$, call $B(\Gamma, \delta)$ the open neighborhood with radius $\delta>0$
around the set $\Gamma$ and  let $E_\delta$ be the connected component 
of $\R^2\setminus B(\Gamma, \delta)$ which intersects $R_0$.  Note that $E_\delta$ is simply connected.
 By choosing $\delta>0$ 
small enough, we also have 
\bel{Omde}
B(x^*, r/2)\subset E_\delta\,,\qquad\qquad E_\delta\cap B(\bar x, \ve)\not= \emptyset.\eeq

We now repeat the previous argument, replacing $\Gamma$ with the boundary $\partial E_\delta$
and $\bar x$ with a point $\bar x_\ve\in \partial E_\delta\cap B(\bar x,\ve)$.
This yields the existence of a path $\gamma_\ve$ , entirely contained inside $E_\delta$,
joining the point  $x^*\in R_0$ with $\bar x_\ve$.   Moreover, the length of this path satisfies
$$\H^1(\gamma_\ve) + {r\over 2} ~\leq~{1\over 2} \H^1(\partial E_\delta).$$

Since $\Gamma$ is a Jordan curve with finite length, we have \cite{ACV}
$$\liminf_{\delta\to 0+} ~\H^1(\partial E_\delta)~=~\H^1(\Gamma).$$
We thus conclude that 
$$\Gamma\subset \ov{R^\Gamma_T}\qquad\hbox{for} ~~T\,=\,{1\over 2} \left(\H^1(\Gamma) - {r\over 2}\right).
$$
Hence $\Gamma$ is not $\lambda$-admissible for any $\lambda\leq 2$.
\endproof
\v

We observe that the condition that $\Gamma= \cup_{t>0}\gamma(t)$ is a Jordan curve implies
\bel{bub}\gamma(t)\cap \ov {R^\gamma_t}~\subseteq~\partial^u(\R^2\setminus \ov{R^\gamma_t}) \qquad\qquad\forall t>0,\eeq
where $\partial^u \Omega$ denotes the boundary of the unbounded component of a set $\Omega$.

When the construction speed is $\lambda<2$, 
the next theorem gives a lower bound on the area of the burned set, assuming that (\ref{bub}) holds.
As before, by $|\Omega|$ we denote the Lebesgue measure of a set $\Omega\subset\R^2$.

\v
\begin{theorem}
For any $\lambda<2$, setting $c_\lambda \doteq (2-\lambda)/4$,  the following holds.
If $\gamma(\cdot)$ is a $\lambda$-admissible strategy 
with the property (\ref{bub}), then  the set burned by the fire satisfies
\bel{bibu} \bigl|R^\gamma_t\bigr|~\geq~ c_\lambda  t^2\qquad\qquad\forall t\geq 0.\eeq
\end{theorem}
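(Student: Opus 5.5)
The plan is to combine the area-growth inequality (\ref{eq:areaineq}) of Proposition~\ref{prop:arearate} with a lower bound on the length of the free front $F_t$. That bound comes from the geometric idea behind Theorem~\ref{t:52}: if the barrier touched by the fire lies on the \emph{outer} boundary of the burned set, then this outer boundary must be long. Concretely, I will try to prove that for a.e.\ $t>0$
\[
\H^1(F_t)~\geq~\frac{2-\lambda}{2}\,t .
\]
Integrating via (\ref{eq:areaineq}) then gives $A(t)\geq A(0)+(2-\lambda)t^2/4\geq c_\lambda t^2$, which is (\ref{bibu}). Throughout I work with a complete strategy, which is no loss of generality by \cite{BDL}, and assume the fire is not already blocked at time $t$; if it were, the Jordan-curve argument of Theorem~\ref{t:52} would force $\lambda>2$.

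\textbf{Step 1 (outer boundary).} Fix a good time $t$ and let $D_t$ be the ``filled'' burned set: $\ov{R^\gamma_t}$ together with all bounded components of its complement. Write $L(t)$ for the length of the boundary of the unbounded component of $\R^2\setminus\ov{R^\gamma_t}$. By (\ref{bub}), every point of $\gamma(t)$ touched by the fire lies on this outer boundary. Hence the part of the outer boundary not covered by the barrier has length at least $L(t)-\lambda t$. Up to $\H^1$-null sets this part lies in $\partial^*R^\gamma_t\setminus\gamma(t)=F_t$, so
\[
\H^1(F_t)~\geq~L(t)-\lambda t .
\]
Boundaries of the holes also contribute to $F_t$, but we simply discard them.

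\textbf{Step 2 (the outer boundary has length $\gtrsim t$).} Pick a point $\bar x$ reached at time close to $t$, with $u^\gamma(\bar x)\approx t$. Since $u^\gamma$ is the minimum time and the region is filled, the path distance inside $D_t$ from $R_0$ to $\bar x$ is essentially $t$. Now rerun Steps 3--5 of the proof of Theorem~\ref{t:52}, with the Jordan domain $E$ replaced by the interior of $D_t$; this domain is simply connected, and (\ref{eq:boundarydist}) from \cite{BW2} is available for it. Going around the outer boundary from $\bar x$ in both directions yields two arcs, each at least as long as a geodesic from $R_0$ to $\bar x$. This should give $L(t)\geq 2t-C$, or at least the weaker bound $L(t)\geq t$ up to lower-order terms. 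The constant $c_\lambda=(2-\lambda)/4$ leaves room for a factor-$\tfrac12$ loss. For instance, I can use only times $s\in[t/2,t]$, or the fact that $A$ is nondecreasing, to absorb the lower-order corrections coming from $R_0$ and from $\delta$-approximations of the boundary, using the perimeter convergence of \cite{ACV}.

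\textbf{Main obstacle.} The delicate step is Step 2 when $\partial D_t$ is not a Jordan curve: it may have pinch points, and the burned set may be nonsmooth. This makes both the ``two arcs around the boundary'' argument and the identification of the uncovered outer boundary with a subset of $F_t$ technical. I would handle both issues exactly as in Step 5 of Theorem~\ref{t:52}. That is, replace $D_t$ by the component $E_\delta$ of the complement of a $\delta$-neighbourhood of the outer boundary, apply the simply connected argument there, and pass to the limit using $\liminf_{\delta\to0}\H^1(\partial E_\delta)$. Any such loss only affects the constant, and the factor $\tfrac14$ in $c_\lambda$ is designed to absorb it.
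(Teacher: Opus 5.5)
Your direct route (a pointwise bound $\H^1(F_t)\ge L(t)-\lambda t$ with $L(t)\ge 2t$, then integration via (\ref{eq:areaineq})) differs from the paper's. The paper argues by contradiction. It pigeonholes a time $\tau'\in[\tau/2,\tau]$ with $\H^1(F_{\tau'})\le 4c_\lambda\tau'$, closes the outer boundary at $\tau'$ to get a $(\lambda+4c_\lambda)$-admissible strategy whose barrier is a Jordan curve, and invokes Theorem~\ref{t:52}. Your plan is viable, but Step~2 has a genuine gap.

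The claim that the path distance in $D_t$ from $R_0$ to $\bar x$ is essentially $u^\gamma(\bar x)\approx t$ requires every path inside $D_t$ to be admissible for the fire. In other words, no wall may obstruct the fire in the interior of the filled set. This is precisely where (\ref{bub}) must enter, and you do not use it there. Without it the step is false in general: delaying arcs force the fire around them, so fire times can greatly exceed intrinsic distances in $D_t$. In Step~1, by contrast, (\ref{bub}) is superfluous, since the uncovered part of the outer boundary has length $\ge L(t)-\lambda t$ for any barrier.

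The paper avoids proving path-admissibility directly by a static comparison. By (\ref{bub}), all touched walls $\gamma(s)\cap\ov{R^\gamma_s}$, $s\le t$, lie on $\Gamma_t\doteq\partial^u(\R^2\setminus\ov{R^\gamma_t})$. Hence $R^\gamma_s\supseteq R^{\Gamma_t}_s$ for $s\le t$, where $R^{\Gamma_t}_s$ is the fire confined by the static curve $\Gamma_t$, and the estimate of Theorem~\ref{t:52} applies to that fire. Used at each time $t$, this gives your bound. Suppose $L(t)<2t$. Then the static fire, and hence also the fire of $\gamma$, would touch all of $\Gamma_t$ by some time $s_0<L(t)/2<t$. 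Since the fire cannot cross $\Gamma_t$ before time $t$, $\Gamma_t$ must already lie in $\gamma(s_0)$ up to $\H^1$-null sets. This gives $L(t)\le\lambda s_0<L(t)$, a contradiction.

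Second, your accounting of losses is wrong. The slack in $c_\lambda$ is a factor $\tfrac12$ on $2-\lambda$, which vanishes as $\lambda\to2$, so nothing can be given away in $L(t)$.
\begin{itemize}
\item The fallback $L(t)\ge t$ only yields $\H^1(F_t)\ge(1-\lambda)t$, which is useless for $\lambda\ge1$.
\item An additive loss, $L(t)\ge 2t-C$ with $C>0$, yields $A(t)\ge A(0)+\frac{2-\lambda}{2}t^2-Ct$. At $t=2C/(2-\lambda)$ this bound and the trivial bound $A(t)\ge A(0)$ both equal $A(0)$, while $c_\lambda t^2=C^2/(2-\lambda)$. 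So (\ref{bibu}) is not obtained for $\lambda$ close to $2$, and restricting to $s\in[t/2,t]$ does not help.
\end{itemize}
You must prove $L(t)\ge 2t$ with no loss, and this is what the argument actually gives. The contribution of $R_0$ has the favorable sign (the paper even gains a multiple of $r$). The $\delta$-approximation enters only through $\liminf_{\delta\to0}\H^1(\partial E_\delta)=\H^1(\Gamma)$.

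With both repairs your argument yields $\H^1(F_t)\ge(2-\lambda)t$ and $A(t)\ge A(0)+(2-\lambda)t^2/2$, which is stronger than (\ref{bibu}). The paper's factor $\tfrac14$ comes from its pigeonhole over $[\tau/2,\tau]$, not from any slack in the geometry.
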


{\bf Proof.} {\bf 1.} By contradiction, assume that there exists a $\tau > 0$  such that
$$
\bigl|R^\gamma_\tau\bigr|~ \leq ~c_\lambda \tau^2,
$$
where $\gamma$ is a $\lambda$-admissible strategy with the property (\ref{bub}). 
We claim that in this case we can select an earlier time $\tau' \in [\tau/2, \tau]$ such that 
$$\H^1(F_{\tau'})~ \leq~ 2 c_\lambda \tau\,.$$
Otherwise, if $\H^1(F_t)> 2 c_\lambda \tau$ for all $t\in [\tau/2,\,\tau]$, then  by (\ref{eq:areaineq})
$$\bigl|R^\gamma_\tau\bigr| ~\geq ~|R_0| + \int_{\tau/2}^\tau  \H^1(F_t) \, dt~>~  \bigl|R_0\bigr| +c_\lambda \tau^2, 
$$
reaching a contradiction.
\v
{\bf 2.}
Since $\tau' \geq  \tau/2$, we deduce 
$$
\H^1(F_{\tau'}) ~\leq ~2 c_{\lambda} \tau  ~\leq~ 4 c_\lambda \tau'.
$$
Now take
$$
\lambda^\prime ~\doteq~\lambda + 4 c_\lambda.
$$
and set 
$$\Gamma\,\doteq\, \partial^u(\R^2\setminus R^\gamma_{\tau'}).$$
Since $R^\gamma_{\tau'} $ is connected, $\Gamma$ is a Jordan curve with length
\bel{H1G}\H^1(\Gamma)~\leq~ \H^1\bigl( \partial R_{\tau^\prime}^\gamma\bigr)~  \leq~ \lambda \tau'  + 4 c_\lambda \tau'~ =~ \lambda^\prime \tau'.\eeq
We claim that the strategy
\bel{Tgam}\Tilde \gamma(t)~\doteq~\left\{ \bega{cl} \gamma(t)\cap \ov{R^\gamma_t}\qquad &\hbox{if}\quad 0\leq t < \tau',\\[2mm]
\Gamma 
 \qquad &\hbox{if}\quad t \geq  \tau',\enda\right.\eeq
 is $\lambda'$-admissible.
 Indeed, for $0<s< t< \tau'$ we have
 $$R^{\Tilde\gamma_t}~=~R^\gamma_t\,,\qquad\qquad \Tilde\gamma(s)\subseteq \Tilde\gamma(t).$$
 Moreover, when $s<\tau'$  by (\ref{bub}) one has
 $$\Tilde\gamma(s) ~=~ \gamma(s)\cap \ov{R^\gamma_s}~ \subseteq ~ \gamma(\tau')\cap \ov{R^\gamma_{\tau'}}
 ~\subseteq~\partial^u\bigl(\R^2\setminus \ov{R^\gamma_{\tau'}}\bigr) ~=~\Gamma.$$
 This shows that the sets $\Tilde\gamma(t)$ satisfy {\bf (A1)}.
 To prove {\bf (A2)} we observe that
 $$\H^1\bigl(\Tilde \gamma(t)\bigr)~\leq~\H^1\bigl( \gamma(t)\bigr)~=~\lambda t~<~\lambda't,$$
 while for $t\geq\tau'$ this admissibility condition follows from (\ref{H1G}).

\v
{\bf 3.}
Adopting the $\lambda'$-admissible strategy (\ref{Tgam}), the fire propagation is completely stopped at time $\tau'$.
However, by Theorem~\ref{t:52} this is possible only if $\lambda' = \lambda + 4 c_\lambda >2$.
Taking $c_\lambda= (2-\lambda)/4$ we thus obtain a contradiction.
This proves the result.
\endproof

\v
{\bf Use of AI tools.} The authors acknowledge the use of ChatGPT as an auxiliary research tool. While the idea of considering two distinct times to control the evolution of the burned region arises naturally, the specific idea of using a weighted double-bubble configuration as a comparison was suggested by GPT-6 Astra, together with the weaker estimate of Section 3. The manuscript was written entirely by the authors, who take full responsibility for its mathematical content and correctness.

{\bf Acknowledgements.} The second author is partially supported by the GNAMPA--INDAM Project 2026, 'Problemi di controllo e regolarità per evoluzione di insiemi'.

\end{document}